\title{A note on one-variable theorems for NSOP}
\author{Will Johnson}
\DeclareMathOperator*{\ind}{\raise0.2ex\hbox{\ooalign{\hidewidth$\vert$\hidewidth\cr\raise-0.9ex\hbox{$\smile$}}}}
\newcommand{\hgt}{\operatorname{ht}}
\newcommand{\Aut}{\operatorname{Aut}}
\newcommand{\tp}{\operatorname{tp}}
\newtheorem{theorem}{Theorem}[section] 
\newtheorem{lemma}[theorem]{Lemma}
\newtheorem{fact}[theorem]{Fact}
\newtheorem{question}[theorem]{Question}
\newtheorem{proposition}[theorem]{Proposition}
\newtheorem{proposition-eh}[theorem]{Proposition(?)}
\newtheorem*{theorem-star}{Theorem}
\newtheorem*{conjecture-star}{Conjecture}
\newtheorem*{lemma-star}{Lemma}
\newtheorem{claim}[theorem]{Claim}
\theoremstyle{definition}
\newtheorem{observation}[theorem]{Observation}
\newtheorem{definition}[theorem]{Definition}
\newtheorem{example}[theorem]{Example}
\newtheorem{remark}[theorem]{Remark}
\theoremstyle{remark}
\newtheorem*{acknowledgment}{Acknowledgments}
\newcommand{\Qq}{\mathbb{Q}}
\newcommand{\Zz}{\mathbb{Z}}
\newcommand{\Nn}{\mathbb{N}}
\newcommand{\Mm}{\mathbb{M}}
\newenvironment{claimproof}[1][\proofname]
               {
                 \proof[#1]
                 
               }
               {
                 \endproof
               }
\let\phi\varphi
\begin{document}

\maketitle\unmarkedfntext{
  \emph{2020 Mathematical Subject Classification}: 03C45.

  \emph{Key words and phrases}: NSOP
}

\begin{abstract}
  We give an example of an SOP theory $T$, such that any
  $L(M)$-formula $\phi(x,y)$ with $|y|=1$ is NSOP.  We show that any
  such $T$ must have the independence property.  We also give a
  simplified proof of Lachlan's \cite{Lachlan} theorem that if every
  $L$-formula $\phi(x,y)$ with $|x|=1$ is NSOP, then $T$ is NSOP.
\end{abstract}

\section{Introduction}
Fix a complete theory $T$.  Recall that a formula $\phi(x,y)$ has the
\emph{strict order property} (SOP) if in some model $M$ there is a
sequence $b_0, b_1,\ldots$ with
\begin{equation*}
  \phi(M,b_0) \subsetneq \phi(M,b_1) \subsetneq \cdots
\end{equation*}
An $L$-theory has the SOP iff some $L$-formula has it.  A formula or
theory is \emph{NSOP} if it doesn't have the SOP.  A classic theorem
of Lachlan \cite{Lachlan} shows that NSOP can be checked on
one-variable formulas:
\begin{theorem}[Lachlan] \label{theorem-0}
  If $T$ has the SOP, then there is an $L$-formula $\phi(x,y)$ with
  the SOP, with $|x|=1$.
\end{theorem}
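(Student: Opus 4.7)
The plan is to induct on $n = |x|$. The base case $n = 1$ is vacuous: $\phi$ itself witnesses the conclusion. For $n \geq 2$, given $\phi(x,y)$ with SOP, I would first replace the SOP witness by an indiscernible sequence $(b_i)_{i<\omega}$ with $\phi(M,b_i) \subsetneq \phi(M,b_{i+1})$, using the standard Ramsey/compactness extraction. Split $x = (x_1, x')$ with $|x_1|=1$, $|x'|=n-1$, and examine the projection $\phi'(x',y) := \exists x_1\,\phi(x_1,x',y)$: by indiscernibility the weakly increasing chain $\phi'(M,b_i)$ is either strictly increasing or constant.

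In the strictly increasing case, $\phi'$ itself has SOP with $|x'|=n-1$, and the inductive hypothesis supplies the desired one-variable SOP formula. In the constant case, set $B := \phi'(M,b_0)$ and define, via the formula $\theta(x';y_1,y_2) := \exists x_1\,(\phi(x_1,x',y_2) \wedge \neg\phi(x_1,x',y_1))$, the ``fiber-growth'' sets $E_i := \theta(M;b_i,b_{i+1})$; strict growth of the SOP chain forces each $E_i \neq \emptyset$. Now consider the partial type $\Sigma(x') := \{x' \in E_i : i < \omega\}$. If $\Sigma$ is consistent, its realizer $a' \in M^{n-1}$ makes the fibers $\phi(M,a',b_i)$ a strict chain in $M$, so $\phi$ viewed as an $L$-formula in the single variable $x_1$ with parameter tuple $(x',y)$ has SOP, and we are done directly.

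If $\Sigma$ is inconsistent, take the minimal $k_0 \geq 2$ such that some $k_0$-fold intersection $\bigcap_{i \in F} E_i$ is empty; by indiscernibility this failure depends only on the combinatorial shape of $F$. The clean case $k_0 = 2$ gives pairwise disjoint $E_i$'s, so $\theta(M;b_0,b_m) = E_0 \sqcup \cdots \sqcup E_{m-1}$ is a strictly increasing chain and $\theta$ has SOP with $|x'|=n-1$, finishing by induction. For $k_0>2$, the natural analogue is the $(k_0-1)$-fold conjunction $\bigwedge_l \theta(x';y_l,z_l)$: fix the first $k_0-2$ parameter pairs so as to name a nonempty $(k_0-2)$-wise intersection $I$, and let the last pair vary, producing a disjoint union $\bigsqcup_j (I \cap E_j)$ of nonempty $(k_0-1)$-wise intersections (disjointness follows from emptiness of $k_0$-wise intersections). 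This is strictly increasing in the parameter $m$, giving SOP for the new formula with $|x'|=n-1$, and we finish by induction.

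The main obstacle I anticipate is in this last step: ensuring that the indiscernibility of $(b_i)_i$ really does force both $(k_0-1)$-fold nonemptiness and $k_0$-fold emptiness simultaneously for the parameter tuples we actually use requires careful bookkeeping, since spaced-out versus adjacent indices yield different order types, and hence possibly different truth values, within the indiscernible sequence. Arranging the parameters so that every relevant intersection has the ``generic'' shape --- perhaps by thinning $(b_i)$ to a sub-indiscernible in which all compared $E_i$'s use disjoint parameter blocks --- is the combinatorial accounting where, I suspect, the author's claimed simplification earns its name.
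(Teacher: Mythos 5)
Your high-level plan---induct on $|x|$, split $x=(x_1,x')$, look at the projection $\exists x_1\,\phi$, and in the constant case analyze the fiber-growth sets $E_i:=\theta(\Mm;b_i,b_{i+1})$---is a sensible one, and the consistent-$\Sigma$ subcase is correct. But the $\Sigma$-inconsistent branch has exactly the gap you suspect, and the thinning you propose does not close it. Emptiness of $\bigcap_{i\in F}E_i$ is controlled by the order type of the parameter multiset $\{b_i,b_{i+1}:i\in F\}$, and when $F$ contains adjacent indices the sets $\{b_i,b_{i+1}\}$ overlap, giving a different order type than a spread-out $F$ of the same size; so ``some pairwise intersection is empty'' does not yield ``all pairs are disjoint.'' If instead you thin to $E_i:=\theta(\Mm;b_{2i},b_{2i+1})$ so that every $k$-fold intersection has the same shape, you fix shape-invariance of the intersections you directly compare, but you lose the identity $\theta(\Mm;b_0,b_m)=E_0\sqcup\cdots\sqcup E_{m-1}$: the telescoping decomposition of $\theta(\Mm;b_0,b_m)$ now passes through the intermediate pieces $\theta(\Mm;b_{2i+1},b_{2i+2})$, which are adjacent to your $E_i$'s and whose interactions with them are precisely what indiscernibility no longer controls. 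The $k_0>2$ branch is worse: the set $I\cap\theta(\Mm;b_0,z)$ involves a ``wide'' interval $\theta(\Mm;b_0,z)$ whose shape differs from a generic $(k_0-1)$-fold intersection, so neither the claimed disjointness nor the claimed strictness follows from indiscernibility. With considerably more care one can replace the $k_0$-analysis by a cleaner dichotomy (e.g.\ on whether $\theta(\Mm;b_0,b_m)$ is strict or constant, and then on whether $E_i\supseteq E_{i+1}$ is strict or constant), but as written the combinatorial step does not go through.

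It is also worth knowing that the paper takes a genuinely different route which avoids this combinatorics entirely. Rather than studying intersections of projected sets, it considers, for each fixed $a\in\Mm^{x_1}$, the poset of $\phi_a$-sets where $\phi_a(x';y):=\phi(a,x';y)$. If no formula in $x_1$ alone and no formula in $x'$ alone has the SOP, then each such poset has finite height, compactness makes this bound uniform in $a$, and ``$\hgt_{\phi_a}(c)\ge k$'' is an $L$-formula $\psi_k(x_1,y)$. Along an indiscernible SOP chain $(c_i)$ one has $\psi_k(\Mm,c_0)\subseteq\psi_k(\Mm,c_1)$; NSOP of $\psi_k$ plus indiscernibility forces equality, hence $\hgt_{\phi_a}(c_0)=\hgt_{\phi_a}(c_1)$ for every $a$, hence $\phi(a,\Mm,c_0)=\phi(a,\Mm,c_1)$ for every $a$, contradicting $\phi(\Mm,c_0)\subsetneq\phi(\Mm,c_1)$. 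The height function absorbs exactly the intersection bookkeeping you were attempting by hand, and that is where the advertised simplification comes from.
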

Here, $|x|$ denotes the length of the tuple of variables $x$.
Lachlan's proof is short but rather convoluted, so in
Section~\ref{simplify} we give what is hopefully a simpler proof.
\begin{remark}
  Lachlan's result is analogous to Shelah's one-variable theorems for
  stability and NIP \cite[Theorems~2.13, 4.6]{Shelah}, Chernikov's
  one-variable theorem for NTP$_2$ \cite[Theorem~2.9, Lemma~3.2]{Ch},
  and Nick Ramsey's one-variable theorem for NSOP$_1$ \cite{Ramsey}.  These
  theorems are identical to Theorem~\ref{theorem-0} with ``SOP''
  replaced with ``unstable'', ``IP'', ``TP$_2$'', and ``SOP$_1$'',
  respectively.
\end{remark}
Lachlan's result suggests the following question:
\begin{question} \label{q1}
  If $T$ has the SOP, is there necessarily an SOP $L$-formula
  $\phi(x,y)$ with $|y|=1$?
\end{question}
The analogous questions for NIP and stability have positive answers
because stability and NIP are symmetric notions: a formula $\phi$ is
stable or NIP iff the opposite formula $\phi^{opp}(y;x) := \phi(x;y)$
is stable or NIP.

In contrast, NSOP is not symmetric, and the answer to
Question~\ref{q1} is \textsc{no}:
\begin{example} \label{between}
  Let $T$ be the theory of the structure $(\Qq,B)$, where $B(x,y,z)$
  means that either $x<y<z$ or $x>y>z$.  Note that $\tp(a,b) =
  \tp(b,a)$ for any distinct singletons $a,b$, in any model of $T$.
  The formula $B(x;y,z)$ has the SOP.  However, no $L$-formula
  $\phi(x;y)$ with $|y|=1$ has the SOP.  Otherwise, take an ascending
  chain
  \begin{equation*}
    \phi(M;b_0) \subsetneq \phi(M;b_1) \subsetneq \cdots
  \end{equation*}
  Then $\tp(b_0,b_1) = \tp(b_1,b_0)$, so $\phi(M;b_1) \subsetneq
  \phi(M;b_0)$, which is absurd.

  The dense circular order is another example, again because $\tp(a,b) =
  \tp(b,a)$ for any two distinct singletons $a,b$.
\end{example}
But what if we allow the formula to mention parameters from the model?
\begin{question} \label{q2}
  If $T$ has the SOP, and $\Mm$ is a monster model of $T$, is there
  necessarily an SOP $L(\Mm)$-formula $\phi(x,y)$ with $|y|=1$?
\end{question}
For instance, in Example~\ref{between}, the formula $\phi(y;z) \equiv
B(0,y;z)$ has the strict order property.  One striking result in this
direction is due to Pierre Simon~\cite{Simon}:
\begin{theorem}[Simon]
  Let $T$ be a theory with monster model $\Mm$.
  \begin{enumerate}
  \item If $T$ is unstable, then some $L(\Mm)$-formula $\phi(x,y)$
    with $|x|=|y|=1$ is unstable.
  \item If $T$ has the independence property, then some
    $L(\Mm)$-formula $\phi(x,y)$ with $|x|=|y|=1$ has the
    independence property.
  \end{enumerate}
\end{theorem}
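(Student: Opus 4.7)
\emph{Proof plan.} My strategy is to combine Shelah's one-variable theorems with a parameter-naming induction on the arity of the free variable $x$. I describe part (2); part (1) follows the same template with ``IP'' replaced by ``unstable'' throughout. By Shelah's one-variable theorem for IP, there is an $L$-formula $\phi(x,y)$ with $|y|=1$ and IP, where $x=(x_1,\ldots,x_n)$ is a finite tuple. The goal is to reduce to $n=1$ at the cost of naming parameters from $\Mm$, so I would proceed by induction on $n$. The base case is immediate. For the inductive step, I fix a witness to IP: an indiscernible sequence of singletons $(b_i)_{i<\omega}$ together with tuples $(a_S)_{S\subseteq\omega}$ satisfying $\models\phi(a_S,b_i)\iff i\in S$. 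By iterated Ramsey extraction and compactness, I arrange substantial mutual indiscernibility between the sequences $(a_S)_S$ and $(b_i)_i$.

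Writing $a_S=(a^1_S,\ldots,a^n_S)$, the inductive step then consists of picking a specific witness $a_{S_0}$, a coordinate $i\in\{1,\ldots,n\}$, and forming the $L(\Mm)$-formula
\[
  \phi'(x_i,y;\bar c)\ :=\ \phi(a^1_{S_0},\ldots,x_i,\ldots,a^n_{S_0},y),
\]
with parameter $\bar c$ given by the fixed coordinates of $a_{S_0}$. The formula $\phi'$ has one fewer free $x$-variable, and the task is to show that for some choice of $S_0$ and $i$ it still has IP; by induction this then yields an $L(\Mm)$-formula with $|x|=|y|=1$ and IP, as desired.

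The crux of the argument, and the hardest step, is verifying that such a choice of $(S_0,i)$ exists, since naively substituting parameters can destroy IP. I would argue by contradiction: assume that for \emph{every} choice of $S_0$ and $i$ the substituted formula $\phi'$ is NIP. Then by the mutual indiscernibility of the witnesses, this NIP behavior is uniform in $S_0$, so one obtains a coordinate-wise alternation bound for $\phi$ along the indiscernible sequence $(b_i)$. My plan is to combine these bounds via a pigeonhole/tree-building argument to deduce a uniform finite alternation rank for $\phi$ itself, contradicting the alternation-rank characterization of IP. I expect the main technical obstacle to be making this coordinate-wise alternation counting precise in the presence of $n$ coordinates, which likely requires extracting an additional layer of indiscernibility among the $(a_S)$'s (indexed by a suitable Boolean algebra on $\omega$) before the pigeonhole can be carried out.
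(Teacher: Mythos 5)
The paper does not prove this statement: it is attributed to Pierre Simon and cited without proof, so there is no in-paper argument to compare against. Evaluated on its own merits, your proposal has a genuine gap precisely at the step you flag as the ``crux,'' and I do not believe the gap can be filled by the pigeonhole/tree-building device you sketch.

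The difficulty is already visible in the case $n=2$. After Shelah you have $\phi(x_1,x_2;y)$ with IP and $|y|=1$, and the standard alternation characterization gives an indiscernible sequence of singletons $(b_k)_{k<\omega}$ and a pair $a=(a^1,a^2)$ with $\phi(a^1,a^2,b_k)$ alternating infinitely. To make $\phi(a^1,x_2;y)$ (say) witness IP, you need \emph{both} that $a^2$ produces infinitely many alternations along some indiscernible sequence of $y$'s \emph{and} that the sequence in question is indiscernible over the new parameter $a^1$. But these two demands pull against each other: passing to a subsequence of $(b_k)$ that is indiscernible over $a^1$ will in general kill the alternation pattern, since that pattern is exactly the failure of indiscernibility over $a$. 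Nothing in ``for every choice of $S_0$ and $i$ the substituted formula is NIP'' gives you a handle on this tension. The assumption quantifies over specializations to parameters coming from your fixed array, but NIP of each such specialization only bounds alternation along sequences indiscernible over \emph{that} specialization's parameters, and those are different parameter sets (and hence potentially different subsequences of $(b_k)$) for each $(S_0,i)$. The ``coordinate-wise alternation bounds'' you want to combine are therefore not bounds along a single common sequence, and there is no uniform ladder for a pigeonhole argument to climb. Your proposed repair---extracting more indiscernibility among the $(a_S)_{S\subseteq\omega}$---runs into the same problem in a sharper form: an array witnessing IP is by design \emph{maximally} non-indiscernible over the $b$'s, so Ramsey extraction that keeps the pattern $\phi(a_S,b_i)\iff i\in S$ intact is not available, and extraction that discards the pattern loses the IP witness.

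Put differently, the statement you are implicitly using---that if $\phi(x_1,\dots,x_n;y)$ has IP with $|y|=1$ then some coordinate-specialization $\phi(c_1,\dots,x_i,\dots,c_n;y)$ already has IP---is essentially the whole content of part (2) of Simon's theorem, not a lemma one can dispatch by pigeonhole once the right indiscernibility has been arranged. Your plan reduces the theorem to a claim of comparable strength without supplying an argument for that claim. To make progress you would need a concrete mechanism for trading the parameter $a^1$ into the indiscernibility base without destroying the alternations of the remaining coordinate, which is exactly the technical heart that a correct proof must supply. I recommend consulting Simon's note directly rather than attempting to reconstruct this step from the ``obvious'' induction.
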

Could we expect the same to work for NSOP?  Unfortunately, the answer is again \textsc{no}:
\begin{theorem} \label{horror}
  There is a theory $T$ with the SOP, such that every
  $L(M)$-formula $\phi(x,y)$ with $|y|=1$ is NSOP.
\end{theorem}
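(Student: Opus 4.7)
The plan is to exhibit a specific theory $T$ with the SOP, then rule out one-variable SOP via a symmetry argument on singletons over arbitrary models. Example~\ref{between} already illustrates the basic idea over $\emptyset$: the identity $\tp(a,b)=\tp(b,a)$ kills $L$-formulas with $|y|=1$ by an immediate reversal of the alleged chain. The task is to make an analogous symmetry hold \emph{over every model} $M$, while still leaving a chain of definable sets available when $|\bar y|\geq 2$.

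First I would build the model, most naturally as the Fraïssé limit of a carefully designed amalgamation class in a relational language whose principal ``order-like'' relation has arity $\geq 3$ and is symmetric in enough of its arguments to prevent any naming of parameters from breaking the interchangeability of two fresh singletons. SOP would then be verified directly: exhibit an $L$-formula $\phi(\bar x;\bar y)$ with $|\bar y|\geq 2$ and an explicit strictly increasing sequence $\phi(\Mm;\bar b_0)\subsetneq\phi(\Mm;\bar b_1)\subsetneq\cdots$ read off from the intended structure.

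The crux is a swap lemma: for every small model $M\prec\Mm$ and every two singletons $a,b\in\Mm\setminus M$ with $\tp(a/M)=\tp(b/M)$, there is $\sigma\in\Aut(\Mm/M)$ interchanging $a$ and $b$. Granting this, suppose some $L(M)$-formula $\phi(\bar x;y)$ had SOP. After taking the parameters of $\phi$ into $M$ and extracting an indiscernible witness $b_0,b_1,\ldots$, all $b_i$ share a type over $M$; pick $\bar c\in M^{|\bar x|}$ with $\Mm\models\phi(\bar c;b_1)\wedge\neg\phi(\bar c;b_0)$. The swap on $\{b_0,b_1\}$ fixes $\bar c$ pointwise and produces $\Mm\models\phi(\bar c;b_0)\wedge\neg\phi(\bar c;b_1)$, a contradiction.

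The main obstacle is the construction itself. The standard avatars of SOP---linear orders, betweenness, circular orders---all fail the desideratum: once a parameter is named, the order collapses onto singletons, as the formula $B(0,x;z)$ after Example~\ref{between} demonstrates. What must be engineered is a structure whose SOP chain depends genuinely on two or more parameters of the relation, so that substituting any single coordinate by a parameter never leaves a strictly increasing chain, yet the amalgamation class is strongly homogeneous enough to supply the swap automorphism in the crux lemma. Balancing these two demands---rigid enough to carry SOP on tuples, flexible enough to interchange singletons over every model---is the delicate design step.
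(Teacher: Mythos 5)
Your outline has the right shape (build a Fra\"isse-generic structure whose SOP formula genuinely needs $|y|\geq 2$, then kill one-variable SOP by a symmetry-of-singletons argument), and your obstruction analysis correctly identifies the failure mode of linear/circular/betweenness orders. The paper's actual construction is of exactly this sort: a ``switchboard'' is a set $M$ with a partial order on $[M]^2$ whose incident-edge sets are antichains, and $T$ is the model companion. But there is a genuine gap in the symmetry step, and it is precisely where the real work lives.

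Your ``swap lemma'' --- that any two singletons $a,b\in\Mm\setminus M$ with $\tp(a/M)=\tp(b/M)$ satisfy $\tp(a,b/M)=\tp(b,a/M)$ --- is substantially stronger than what the paper's theory satisfies, and in fact it is \emph{false} there. In the switchboard model companion one can realize configurations with $\{a_1,b\}<\{a_2,c\}$ but $\{a_2,b\}\not<\{a_1,c\}$ for $b,c\in M$, so $a_1a_2\not\equiv_M a_2a_1$. This is why the paper introduces the hierarchy of ``half-symmetric'', ``distinguished'', and ``symmetric'' 2-types (Definitions~\ref{halfshard} and \ref{disting}) rather than asserting symmetry outright. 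The actual mechanism is asymptotic: one builds the chain $c_0,c_1,\ldots$ so that consecutive elements realize the given 2-type and each $c_i$ ($i\geq 2$) is \emph{freely amalgamated} with $c_0$ over $Bc_{i-1}$; then a bounded-path argument via the Switchboard Axiom and pigeonhole shows that $\tp(c_0,c_i/B)$ becomes distinguished once $i\geq |B|$, and symmetric once $i>|B|$ after one further round of the construction (Proposition~\ref{core} and Lemma~\ref{yuck}). Only then does the reversal $\phi(\Mm,c'_0)\subsetneq\phi(\Mm,c'_n)\Rightarrow\phi(\Mm,c'_n)\subsetneq\phi(\Mm,c'_0)$ apply.

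So the issue is not merely that you have not exhibited the structure; it is that your proposed proof schema assumes a uniform swap automorphism over every model, which the paper's example does not satisfy and which, given Lachlan's theorem, may not be achievable at all in an SOP theory. The correct replacement is the weaker, combinatorial statement that \emph{iterating} the SOP step sufficiently many times (a number controlled by the size of the parameter set) eventually produces a symmetric pair. You should replace your crux lemma with a statement of that quantitative, iterated form, and prove it using the specific structure of your amalgamation class (in the paper: the antichain condition forces any $<$-path between two ``new'' edges through parameters to shorten, and the free-amalgamation step keeps the path length under control, so pigeonhole on $B$ terminates the induction).
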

Although the theory $T$ has a simple description (see
Subsection~\ref{defgoal}), the proof of Theorem~\ref{horror} is
extremely unpleasant, taking up the bulk of this paper.  I would love
to find a simpler counterexample in the spirit of
Example~\ref{between}.

On the other hand, the answer to Question~\ref{q2} is \textsc{yes} if
we assume $T$ is NIP:
\begin{theorem} \label{nipcaseintro}
  Let $T$ be an NIP theory with monster model $\Mm$.  If $T$ has the
  SOP, then there is an $L(\Mm)$-formula $\phi(x,y)$ with the SOP with
  $|x|=|y|=1$.
\end{theorem}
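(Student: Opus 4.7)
The plan is to combine Simon's one-variable theorem for instability with NIP-specific technology, in order to convert ``unstable'' into ``SOP'' at the arity $|x|=|y|=1$. Since every SOP formula has the order property, $T$ is unstable, so Simon's theorem (part~1 of the theorem cited above) delivers an $L(\Mm)$-formula $\psi(x,y)$ with $|x|=|y|=1$ that is unstable. This immediately settles the arity requirement; what remains is to promote ``unstable'' to the strict order property at the same arity.

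Next, I would extract via Ramsey and compactness an indiscernible sequence $(b_q)_{q \in \Qq}$ and a parameter $a \in \Mm$ witnessing instability of $\psi$. Because $T$ is NIP, the number of alternations of $\psi(x,\cdot)$ along any indiscernible sequence is uniformly bounded by some integer $N$, and after further refinement I may arrange exactly one alternation, so that $\psi(a, b_q)$ holds iff $q > 0$. By indiscernibility and compactness, for each $q \in \Qq$ there is $a_q \in \Mm$ with $\psi(a_q, b_r)$ iff $r > q$, providing a dual family realizing every cut of the sequence.

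The main obstacle is the third step: turning this configuration into an SOP formula $\phi(x,y) \in L(\Mm)$ with $|x|=|y|=1$. Taking $\phi := \psi$ and the chain $(\psi(\Mm, b_q))_{q \in \Qq}$ already gives strict distinctness---one has $a_{q''} \in \psi(\Mm, b_{q'}) \setminus \psi(\Mm, b_q)$ whenever $q \leq q'' < q'$---but possibly fails nestedness, since for a general $x \in \Mm$ the values $\psi(x, b_q)$ may alternate in both directions along the sequence (NIP bounds the \emph{number} of alternations, not their direction). To force nestedness I would refine to $\phi(x,y) := \psi(x,y) \land \theta(x)$, where $\theta(x) \in L(\Mm)$ picks out those $x$ for which $\psi(x,\cdot)$ is non-decreasing along $(b_q)$. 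Because alternations are bounded by $N$, this monotonicity condition on the full sequence is equivalent, via indiscernibility and NIP, to a first-order condition over a fixed finite sub-tuple of $(b_q)$, so $\theta$ is indeed an $L(\Mm)$-formula. The dual elements $a_q$ lie in $\theta(\Mm)$, since their truth pattern $\psi(a_q, \cdot)$ has exactly one upward alternation, so the chain $(\phi(\Mm, b_q))_q$ is both nested and strictly increasing, witnessing SOP of $\phi$ at the required arity. If this direct construction of $\theta$ runs into trouble, the fallback is to appeal to the theory of Morley sequences of invariant types in NIP: an unstable NIP theory admits a non-generically-stable invariant 1-type whose Morley sequence is indiscernible but asymmetric, and the asymmetry is definably detected by an $L(\Mm)$-formula with $|x|=|y|=1$ that linearly orders an infinite set---and hence has SOP.
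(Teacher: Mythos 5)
Your approach diverges from the paper's: the paper first reduces to $|y|=1$ via Lachlan's Theorem~\ref{lachlan-1}, then argues contrapositively, showing (Claim~\ref{claim1}) that if no $|y|=1$ SOP formula exists one can freely permute $\psi$-types along an indiscernible sequence of singletons, which together with Shelah's $|y|=1$ instability theorem yields IP, contradicting NIP. You instead start from Simon's $|x|=|y|=1$ unstable formula and try to build an SOP formula at the same arity directly, which is a genuinely different strategy.

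Unfortunately your third step has a real gap. The assertion that $\theta(x):=$``$\psi(x,\cdot)$ is non-decreasing along $(b_q)_{q\in\Qq}$'' is first-order because ``alternations are bounded by $N$'' so the monotonicity condition ``is equivalent \ldots to a first-order condition over a fixed finite sub-tuple'' is false. NIP bounds the \emph{number} of alternations but says nothing about \emph{where} they occur: fix any finite $q_0<\cdots<q_m$, and the pattern $\psi(x,b_{q_0}),\ldots,\psi(x,b_{q_m})$ can be constantly false (hence consistent with global monotonicity) while the full pattern on $\Qq$ has a bump strictly between two consecutive $q_i$'s, violating monotonicity. So the set of monotone $x$ is type-definable, not definable, and your $\phi:=\psi\wedge\theta$ is not a formula. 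If one instead restricts $\theta$ to a finite sub-tuple one only obtains a finite chain of nested $\phi$-sets, which does not witness SOP. The ``fallback'' is likewise not a proof: passing from ``the Morley sequence of a non-generically-stable invariant 1-type is asymmetric'' to ``the asymmetry is detected by an $L(\Mm)$-formula $\phi(x,y)$ with $|x|=|y|=1$ that linearly orders an infinite set'' is essentially a restatement of the theorem rather than a reduction to a known fact, since the failure of total indiscernibility could a priori be visible only in a formula of higher arity.
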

The proof is quite easy, falling directly out of the standard proof
that ``stable $=$ NIP $+$ NSOP''.  We give the details in
Section~\ref{nip-case}.  Theorem~\ref{nipcaseintro} shows that any
theory $T$ as in Theorem~\ref{horror} must have both the SOP and IP,
which perhaps explains why $T$ must be complicated.

Theorem~\ref{horror} can be recast more geometrically as a statement
about definable posets.  Recall that a poset $(P,\le)$ has
\emph{finite height} if there is a finite upper bound $n$ on the
cardinality of chains in $P$.  Any formula $\phi(x,y)$ determines a
partial order on $\Mm^y$ in which
\begin{equation*}
  b < b' \iff \phi(\Mm,b) \subsetneq \phi(\Mm,b').
\end{equation*}
The formula $\phi(x,y)$ has the SOP if and only if this poset has
infinite height.  Theorem~\ref{horror} is thus equivalent to the
following:
\begin{theorem}
  There is an SOP theory $T$, such that any definable poset $(P,\le)$
  with $P \subseteq \Mm^1$ has finite height.
\end{theorem}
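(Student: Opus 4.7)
The plan is to derive this geometric statement directly from Theorem~\ref{horror} via the canonical encoding of a partial order by its principal down-sets. Let $T$ be the theory produced by Theorem~\ref{horror}, so $T$ has the SOP while every $L(\Mm)$-formula $\phi(x,y)$ with $|y|=1$ is NSOP; the task reduces to checking that every definable poset $(P,\le)$ with $P \subseteq \Mm^1$ has finite height.

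Fix such a poset, let $\chi(y)$ and $\psi(y,y')$ be $L(\Mm)$-formulas defining $P$ and $\le$, and form
\[
\phi(x,y) \;:=\; \chi(x) \wedge \psi(x,y),
\]
an $L(\Mm)$-formula with $|x|=|y|=1$. The routine verification I would then carry out is that the principal down-set map $b \mapsto \phi(\Mm,b) = \{a \in P : a \le b\}$ is strictly order-preserving on $(P,\le)$: transitivity of $\le$ gives $b \le b' \Rightarrow \phi(\Mm,b) \subseteq \phi(\Mm,b')$, the substitution $a := b$ into such an inclusion gives the converse, and antisymmetry turns equality of down-sets into $b = b'$.

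Assuming for contradiction that $(P,\le)$ has infinite height, I would pick an infinite chain $b_0 < b_1 < b_2 < \cdots$ in $P$; the order-preserving property then immediately produces $\phi(\Mm,b_0) \subsetneq \phi(\Mm,b_1) \subsetneq \cdots$, so $\phi$ has the SOP, contradicting the defining property of $T$. There is no real obstacle here: the statement is Theorem~\ref{horror} re-expressed in the language of definable posets, with the encoding $\phi(x,y) = \bigl(x \in P \,\wedge\, x \le y\bigr)$ doing all the work, and nothing beyond the elementary verification that principal down-sets strictly preserve a partial order is required.
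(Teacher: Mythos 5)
Your proposal is correct and is essentially the argument the paper has in mind. The paper records the correspondence in the direction formula $\to$ poset of $\phi$-sets (via $b < b' \iff \phi(\Mm,b) \subsetneq \phi(\Mm,b')$) and then asserts the equivalence; your proof supplies the other side of that same correspondence, encoding a given definable poset as the principal-down-set formula $\phi(x,y) = (x \in P \wedge x \le y)$ and checking strict monotonicity, which is exactly the direction needed to derive this theorem from Theorem~\ref{horror}.
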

This can be contrasted with the situation with linear orders:
\begin{fact}[{\cite[Lemma~5.4]{dpm2}}]
  If there is an infinite definable linear order $(P,\le)$, then there
  is one with $P \subseteq \Mm^1$.
\end{fact}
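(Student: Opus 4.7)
The plan is to induct on $n$, where $P \subseteq \Mm^n$ is the ambient arity of the linear order. The base case $n = 1$ is vacuous, so assume $n \ge 2$ and split variables as $x = (x_1, y)$ with $|y| = n - 1$. Let $\pi : \Mm^n \to \Mm$ be projection onto the first coordinate, and for each $a \in \pi(P)$ put $F_a = \{b \in \Mm^{n-1} : (a, b) \in P\}$.

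First I would check whether some fiber $F_a$ is infinite. If so, restricting $\le$ to $\{a\} \times F_a$ (identified with $F_a$) yields an infinite definable linear order on a subset of $\Mm^{n-1}$, and the inductive hypothesis applies.

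Otherwise every $F_a$ is finite, and since $P$ is infinite the projection $\pi(P) \subseteq \Mm$ must itself be infinite. Because $\le$ is total, each finite $F_a$ has a $\le$-greatest element, and the formula
\[
  \psi(a, b) \;:\; (a, b) \in P \;\wedge\; \forall b'\,\bigl((a, b') \in P \to (a, b') \le (a, b)\bigr)
\]
defines a function $f : \pi(P) \to \Mm^{n-1}$ uniformly in $a$. Now declare $a \le^{*} a'$ on $\pi(P)$ iff $(a, f(a)) \le (a', f(a'))$. Reflexivity, transitivity, and antisymmetry are inherited from $\le$, and totality holds because distinct $a, a' \in \pi(P)$ already differ in the first coordinate, so $(a, f(a)) \neq (a', f(a'))$. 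Thus $\le^{*}$ is an infinite definable linear order on $\pi(P) \subseteq \Mm^{1}$, completing the inductive step.

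The argument has no serious obstacle. The one point requiring a moment of care is uniform definability of $f$: all fibers $F_a$ are finite in Case~2, but there is no a priori uniform bound on their cardinalities. Uniformity is nonetheless delivered by the single formula $\psi$ above, which picks out the maximum whenever the fiber is finite and nonempty, regardless of its size.
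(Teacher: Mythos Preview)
The paper does not actually prove this statement; it is cited as \cite[Lemma~5.4]{dpm2} and stated without proof, so there is no in-paper argument to compare against. Evaluating your proof on its own merits: it is correct. The induction on the arity $n$ with the fiber/projection dichotomy is the natural approach, and both cases go through as you describe. In Case~2 the key point is exactly the one you flag at the end---that the ``maximum of the fiber'' is defined by a single formula $\psi$ independent of the (possibly unbounded) fiber sizes---and you handle it cleanly. Antisymmetry and totality of $\le^*$ follow because the section $a \mapsto (a,f(a))$ is injective, as you note.

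One cosmetic remark: in Case~2 you land directly in $\Mm^1$ rather than merely in $\Mm^{n-1}$, so strictly speaking you are not ``completing the inductive step'' there but rather finishing the proof outright. This is harmless.
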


\subsection{Conventions}
Throughout, letters $a,b,c,\ldots,x,y,z,\ldots$ represent tuples of
elements or variables, rather than single elements or variables.  The
length of a tuple $x$ is written $|x|$.  If $x$ is a tuple of
variables and $M$ is a structure, then $M^x$ is the set of tuples in
$M$ of length $|x|$.  By default we work in a monster model $\Mm$ of a
complete $L$-theory $T$.  If $A$ is a set of parameters, then $L(A)$
is the expansion of $L$ by naming the elements of $A$ as constants.
We try to clarify whether ``formula'' means $L(\Mm)$-formula or
$L$-formula, whenever the difference matters.  A partial order means a
\emph{strict} partial order.  We write disjoint unions as $X \sqcup
Y$.

\subsection{Outline}
Section~\ref{simplify} gives a simplified proof of Lachlan's theorem
mentioned above.  Section~\ref{nip-case} builds off this to prove the
one-variable theorem for NSOP in the NIP case.
Section~\ref{sec4}---the bulk of the paper---works through the theory
of \emph{switchboards}, whose model companion is an example of a
theory as in Theorem~\ref{horror}.
\begin{itemize}
\item Subsection~\ref{s4.1} gives motivation for our choice of theory.
\item Subsection~\ref{defgoal} introduces the theory of switchboards.
\item Subsection~\ref{beginproof} introduces an expansion, the theory
  of \emph{labeled switchboards}.  Labeled switchboards will have the
  amalgamation property, unlike unlabeled switchboards.
\item Subsection~\ref{sefa} verifies most of the amalgamation property
  for labeled switchboards.
\item Subsection~\ref{whoops} builds off this to prove that the theory
  of labeled switchboards has a model completion $T^+$.
\item Subsection~\ref{compare} shows that the original class of
  (unlabeled) switchboards has a model companion $T^-$, definitionally
  equivalent to $T^+$.
\item Subsection~\ref{sop-duh} checks that $T^+$ and $T^-$ have the
  SOP.
\item Finally, Subsection~\ref{endproof} verifies the property that
  $\phi(x,y)$ is NSOP whenever $|y|=1$.
\end{itemize}

\section{A simplified proof of Lachlan's theorem} \label{simplify}
Recall that an $L(\Mm)$-formula $\phi(x,y)$ is NSOP iff the poset of $\phi$-sets
$\{\phi(\Mm,b) : b \in \Mm^y\}$ has finite height.
If $P$ is a poset with finite height, let $\hgt : P \to \Nn$ be the
height function.  Thus $\hgt(a) \ge k$ iff there is a chain $a_0 < a_1
< \cdots < a_k = a$.  If an $L(\Mm)$-formula $\phi(x;y)$ is NSOP and
$b \in \Mm^y$, let $\hgt_\phi(b)$ denote the height of $\phi(\Mm;b)$ in
the poset of $\phi$-sets.  Then
\begin{equation*}
  \phi(\Mm,b) \subsetneq \phi(\Mm,b') \implies \hgt_\phi(b) < \hgt_\phi(b').
\end{equation*}
\begin{lemma} \label{lachlan-step}
  If an $L(\Mm)$-formula $\phi(x,y;z)$ has the SOP, then some
  $L(\Mm)$-formula $\psi(x;z)$ or $\theta(y;z)$ has the SOP.
\end{lemma}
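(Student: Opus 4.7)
I would prove the contrapositive: assuming every $L(\Mm)$-formula $\psi(x;w)$ and every $L(\Mm)$-formula $\theta(y;w)$ is NSOP (reading the second variable tuple as having arbitrary length, which is what one needs for the inductive deduction of Theorem~\ref{theorem-0}), show that $\phi(x,y;z)$ must be NSOP. The key idea is that the height function of an NSOP formula is itself first-order definable, and this lets us convert a strict increase in a $\phi(x,y;z)$-chain into a strict increase in one of a bounded family of $\theta(y;z)$-chains.

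Concretely, the formula $\phi(x;y,z)$, regarded as $\psi(x;w)$ with $w=(y,z)$, is NSOP by hypothesis, so the poset of $\phi(x;y,z)$-sets has some finite height $N$. Let $\rho(b,c):=\hgt_{\phi(x;y,z)}(\phi(\Mm;b,c))\in\{0,\ldots,N\}$. For each $k\in\{1,\ldots,N\}$ the condition ``$\rho(y,z)\ge k$'' is first-order expressible — it asserts the existence of $y_1z_1,\ldots,y_{k-1}z_{k-1}$ with $\phi(\Mm;y_1,z_1)\subsetneq\cdots\subsetneq\phi(\Mm;y_{k-1},z_{k-1})\subsetneq\phi(\Mm;y,z)$ — so the $L$-formula $\tau_k(y;z):=[\rho(y,z)\ge k]$ is a $\theta(y;z)$, hence NSOP with some height bound $M_k$.

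Now suppose for contradiction $\phi(x,y;z)$ admits a strict chain $\phi(\Mm,\Mm;c_0)\subsetneq\cdots\subsetneq\phi(\Mm,\Mm;c_K)$ with $K>\sum_{k=1}^N M_k$. For each $i<K$ pick $(a_i,b_i)\in\phi(\Mm,\Mm;c_{i+1})\setminus\phi(\Mm,\Mm;c_i)$; then $\phi(\Mm,b_i;c_i)\subsetneq\phi(\Mm,b_i;c_{i+1})$, so by strict monotonicity of height on chains, $\rho(b_i,c_i)<\rho(b_i,c_{i+1})$. Setting $k_i:=\rho(b_i,c_{i+1})\in\{1,\ldots,N\}$ yields $b_i\in\tau_{k_i}(\Mm;c_{i+1})\setminus\tau_{k_i}(\Mm;c_i)$, so the chain $\tau_{k_i}(\Mm;c_0)\subseteq\cdots\subseteq\tau_{k_i}(\Mm;c_K)$ strictly increases at step $i$. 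Each $\tau_k$-chain strictly increases at $\le M_k$ indices by NSOP of $\tau_k$, so summing over $k$ gives $K\le\sum_{k=1}^N M_k$, contradicting our choice of $K$.

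The only subtleties I anticipate are (a) pinning down the intended interpretation of ``$\psi(x;z)$ or $\theta(y;z)$'' noted above, and (b) the strict monotonicity of height on chains — which is immediate since $A\subsetneq B$ means any maximal chain ending at $A$ extends strictly to one ending at $B$. The first-order definability of $\rho\ge k$ is a routine expansion, and what remains is a one-line pigeonhole.
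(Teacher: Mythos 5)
Your pigeonhole argument is correct as a piece of mathematics, but it proves a genuinely weaker statement than Lemma~\ref{lachlan-step} as written, and the extra strength is not just a technical nicety — it is used later in the paper. The lemma's note immediately after the statement insists that the $z$ in the conclusion is the same $z$ as in $\phi(x,y;z)$: ``in particular, the length of $z$ didn't change.'' Your proof, in contrapositive form, needs not only that every $\psi(x;z)$ and $\theta(y;z)$ is NSOP, but also that $\phi(x;y,z)$ — object tuple $x$, \emph{parameter} tuple $(y,z)$ — is NSOP. That is a strictly stronger hypothesis: $\phi(x;y,z)$ is \emph{not} of the form $\psi(x;z)$, since the parameter tuple has changed. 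This is not a cosmetic choice in your proof; the bound $N=\hgt$ of the poset of $\phi(x;y,z)$-sets is what makes $\rho$ take values in $\{0,\dots,N\}$, giving a finite family $\tau_1,\dots,\tau_N$ of $\theta(y;z)$-formulas for the pigeonhole. Without this bound, the indices $k_i$ need not live in a fixed finite set, $\sum_k M_k$ is not even defined, and the argument collapses. So you have proved: ``If every $\psi(x;w)$ and every $\theta(y;w)$ (for \emph{arbitrary} $w$) is NSOP, then $\phi(x,y;z)$ is NSOP,'' which is enough for Theorem~\ref{theorem-0} but is not the stated lemma. The parameter-preserving form matters downstream: Theorem~\ref{lachlan-1} (which keeps $|y|$ fixed) is invoked in the proof of Theorem~\ref{nipcase} precisely to shrink $|x|$ to $1$ while preserving $|y|=1$; your version would only give an SOP formula $\psi(x;w)$ with $|x|=1$ and some \emph{other} parameter tuple $w$, which does not finish that argument.

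The paper's proof avoids this trap by a different construction. It fixes $a\in\Mm^x$ and uses the height function of $\phi_a(y;z)$ — a $\theta(y;z)$-formula, NSOP by hypothesis — rather than the height of $\phi(x;y,z)$. The formula $\psi_k(x,z)$ expressing $\hgt_{\phi_a}(c)\ge k$ then has free variables $x,z$ only, so it is a $\psi(x;z)$-formula and is NSOP by the other half of the hypothesis. Crucially, there is no need for a \emph{uniform} bound on $\hgt_{\phi_a}$ over all $a$: instead one takes an indiscernible sequence $c_0,c_1,\dots$ along the $\phi$-chain, observes $\psi_k(\Mm,c_0)\subseteq\psi_k(\Mm,c_1)$ for every $k$, forces equality from NSOP of $\psi_k$ plus indiscernibility, and then deduces $\hgt_{\phi_a}(c_0)=\hgt_{\phi_a}(c_1)$ for every $a$ and $k$ simultaneously, whence $\phi(\Mm,c_0)=\phi(\Mm,c_1)$ — a contradiction. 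So the indiscernibility step is doing exactly the work that the uniform bound $N$ does in your argument, and is what lets the paper get by with the weaker hypothesis. Your version is a clean quantitative variant and worth knowing, but to repair it to the stated lemma you would want to replace the global height $\rho$ by the family of $a$-indexed heights $\hgt_{\phi_a}$, at which point you essentially recover the paper's argument.
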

Note that the $x,y,z$ appearing in $\psi(x;z)$ and $\theta(y;z)$ are
the same $x,y,z$ appearing in $\phi(x,y;z)$.  In particular, the
length of $z$ didn't change.
\begin{proof}
  Suppose the lemma fails.  Without loss of generality, $\phi$ is an $L$-formula.
  For $a \in \Mm^x$, let $\phi_a(y;z)$ be $\phi(a,y,z)$.  Then
  $\phi_a$ is NSOP so $\hgt_{\phi_a}(c)$ makes sense for any $a \in
  \Mm^x$ and $c \in \Mm^z$.  For $k \in \Nn$ let $\psi_k(x,z)$ be the
  $L$-formula such that
  \begin{equation*}
    \hgt_{\phi_a}(c) \ge k \iff \Mm \models \psi_k(a,c).
  \end{equation*}
  Take an indiscernible sequence $c_0, c_1, c_2, \ldots$ with
  \begin{equation*}
    \phi(\Mm,c_0) \subsetneq \phi(\Mm,c_1) \subsetneq \cdots
  \end{equation*}
  For any $a \in \Mm^x$ and $k \in \Nn$ we have
  \begin{gather*}
    \phi(a,\Mm,c_0) \subseteq \phi(a,\Mm,c_1) \tag{$\ast$} \\
    \hgt_{\phi_a}(c_0) \le \hgt_{\phi_a}(c_1) \\
    \hgt_{\phi_a}(c_0) \ge k \implies \hgt_{\phi_a}(c_1) \ge k \\
    a \in \psi_k(\Mm,c_0) \implies a \in \psi_k(\Mm,c_1),
  \end{gather*}
  and so $\psi_k(\Mm,c_0) \subseteq \psi_k(\Mm,c_1)$.  Equality must
  hold, or $\psi_k$ has the SOP by indiscernibility.  Then for any $a
  \in \Mm^x$ and $k \in \Nn$, we have
  \begin{gather*}
    a \in \psi_k(\Mm,c_0) \iff a \in \psi_k(\Mm,c_1) \\
    \hgt_{\phi_a}(c_0) \ge k \iff \hgt_{\phi_a}(c_1) \ge k \\    
    \hgt_{\phi_a}(c_0) = \hgt_{\phi_a}(c_1) \\
    \phi(a,\Mm,c_0) = \phi(a,\Mm,c_1),
  \end{gather*}
where the last line follows by ($\ast$).
  Thus $\phi(\Mm,c_0) = \phi(\Mm,c_1)$, a contradiction.
\end{proof}
\begin{theorem}[Lachlan] \label{lachlan-1}
  If an $L(\Mm)$-formula $\phi(x;y)$ has the SOP, then some
  $L(\Mm)$-formula $\psi(z;y)$ has the SOP with $|z|=1$.
\end{theorem}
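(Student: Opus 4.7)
The plan is to derive the theorem from Lemma~\ref{lachlan-step} by induction on $|x|$. The base case $|x| = 1$ is immediate: take $\psi := \phi$ and $z := x$.

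For the inductive step, assume $|x| > 1$ and split the variable tuple as $x = (x_1, x_2)$ with $|x_1| = 1$ and $|x_2| = |x| - 1$. Then $\phi(x_1, x_2; y)$ is an $L(\Mm)$-formula with SOP whose free variables are partitioned into three blocks in the exact way required by Lemma~\ref{lachlan-step} (with the $z$ of the lemma being our $y$). Applying the lemma, either some $L(\Mm)$-formula $\psi(x_1; y)$ has the SOP, in which case we are done because $|x_1| = 1$, or some $L(\Mm)$-formula $\theta(x_2; y)$ has the SOP, in which case we apply the inductive hypothesis to $\theta$, since $|x_2| < |x|$, obtaining an $L(\Mm)$-formula $\psi(z; y)$ with the SOP and $|z| = 1$.

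All the real work has already been done in Lemma~\ref{lachlan-step}; the only potential pitfall is making sure the variable partition is being applied in the correct direction, i.e.\ that the parameter variable $y$ plays the role of $z$ in the lemma and is preserved through the recursion (this is explicitly guaranteed by the remark following the statement of Lemma~\ref{lachlan-step}). Since the length of $y$ never changes in either the base case or the recursive call, the induction is clean and terminates in $|x|$ steps.
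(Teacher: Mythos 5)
Your proof is correct and is exactly the argument the paper intends; the paper's own proof is the one-line ``By induction on $|x|$ using Lemma~\ref{lachlan-step},'' and your filling-in of the base case, the split $x=(x_1,x_2)$ with $|x_1|=1$, and the two branches of the lemma is the standard unpacking of that line.
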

Note that the length of $y$ is the same in $\phi$ and $\psi$.
\begin{proof}
  By induction on $|x|$ using Lemma~\ref{lachlan-step}.
\end{proof}
\begin{theorem}[Lachlan] \label{lachlan-2}
  If $T$ has the SOP, then some $L$-formula $\phi(x,y)$ with $|x|=1$
  has the SOP.
\end{theorem}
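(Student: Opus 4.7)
The plan is to deduce Theorem~\ref{lachlan-2} from Theorem~\ref{lachlan-1} by absorbing parameters into the $y$-variable, a standard maneuver.

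First, since $T$ has the SOP, there is an $L$-formula $\phi(x;y)$ with the SOP. This is a fortiori an $L(\Mm)$-formula with the SOP, so Theorem~\ref{lachlan-1} yields an $L(\Mm)$-formula $\psi(z;y)$ with the SOP and $|z|=1$. The fact that Theorem~\ref{lachlan-1} leaves the length of $y$ unchanged is irrelevant for the present purpose.

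Next, write $\psi(z;y)$ as $\psi'(z;y,\bar m)$ for some $L$-formula $\psi'(z;y,w)$ and some parameter tuple $\bar m \in \Mm^w$. Setting $\tilde y := (y,w)$, I claim that the $L$-formula $\psi'(z;\tilde y)$ has the SOP with $|z|=1$. Indeed, any chain
\begin{equation*}
  \psi(\Mm;b_0) \subsetneq \psi(\Mm;b_1) \subsetneq \cdots
\end{equation*}
witnessing the SOP of $\psi$ translates directly to the chain $\psi'(\Mm;b_i,\bar m)$ of the same $\phi'$-sets, witnessing the SOP of $\psi'(z;\tilde y)$.

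There is essentially no obstacle to this plan: the substantive content is already contained in Theorem~\ref{lachlan-1}, and the parameter-absorption step is routine bookkeeping which does not disturb the condition $|z|=1$.
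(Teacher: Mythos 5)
Your proof is correct and is essentially identical to the paper's: apply Theorem~\ref{lachlan-1} to get an $L(\Mm)$-formula with $|x|=1$, then absorb the parameters into the $y$-tuple and observe that the original chain still witnesses SOP for the resulting $L$-formula. (There is a harmless typo where you write ``$\phi'$-sets'' for ``$\psi'$-sets''.)
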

\begin{proof}
  Theorem~\ref{lachlan-1} gives an $L(\Mm)$-formula $\psi(x;y)$ with
  the SOP, with $|x|=1$.  Write $\psi(x,y)$ as $\phi(x,y,c)$ for some
  tuple $c$ in $\Mm$.  Then $\phi(x;y,z)$ has the SOP.  Indeed, if
  \begin{equation*}
    \psi(\Mm,b_0) \subsetneq \psi(\Mm,b_1) \subsetneq \cdots,
  \end{equation*}
  then
  \begin{equation*}
    \phi(\Mm,b_0,c) \subsetneq \phi(\Mm,b_1,c) \subsetneq \cdots \qedhere
  \end{equation*}
\end{proof}

\section{The NIP case} \label{nip-case}
\begin{theorem} \label{nipcase}
  Suppose $T$ is NIP but has the SOP.  Then there is an
  $L(\Mm)$-formula $\phi(x,y)$ with the SOP, with $|x|=|y|=1$.
\end{theorem}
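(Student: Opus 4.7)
My plan is to combine Simon's theorem for unstable formulas (quoted in the introduction) with the standard proof that NIP together with NSOP implies stability, carried out so that all variables stay at singleton length.

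Since $T$ has SOP, $T$ is unstable. By Simon's theorem, there is an $L(\Mm)$-formula $\chi(x,y)$ with $|x|=|y|=1$ that is unstable, and since $T$ is NIP, so is $\chi$. It then suffices to show that any NIP, OP $L(\Mm)$-formula $\chi(x,y)$ with $|x|=|y|=1$ gives rise to some SOP $L(\Mm)$-formula with $|x|=|y|=1$.

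Take an OP-witnessing jointly indiscernible sequence $(a_i, b_i)_{i\in\omega}$ of singletons, with $\chi(a_i,b_j) \iff i<j$. The $\chi$-fibers $\chi(\Mm,b_j)$, restricted to $\{a_k\}_k$, form a strict $\subsetneq$-ascending chain in $j$. If this already lifts to a strict chain of full $\chi$-fibers in $\Mm^1$, then $\chi$ itself has SOP with $|x|=|y|=1$, and we are done. Otherwise, by indiscernibility there is a singleton $x_*\in\Mm^1$ with $\chi(x_*,b_0)\wedge\neg\chi(x_*,b_1)$, and after Ramsey we may assume that $(a_i,b_i,x_i)_i$ is jointly indiscernible, with each singleton $x_i\in\Mm^1$ satisfying $\chi(x_i,b_i)\wedge\neg\chi(x_i,b_{i+1})$.

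Now apply the NIP lemma to $\chi$ along $(b_k)_k$: the set $P:=\{k:\chi(x_0,b_k)\}$ is a finite Boolean combination of convex subsets of $\omega$ of uniformly bounded complexity, with $0\in P$ and $1\notin P$; by indiscernibility, $\{k:\chi(x_i,b_k)\}$ is just $P$ shifted by $i$. A short case analysis on the shape of $P$ then shows that either (a) the fibers $\{y:\chi(x_i,y)\}$ themselves form a strict $\subsetneq$-chain in $i$, yielding SOP for $\chi$ viewed with the roles of $x$ and $y$ swapped---still with $|x|=|y|=1$; or (b) the combined traces of $\chi(\cdot,b_j)$ and $\chi(\cdot,x_i)$ on the indiscernible $(a_k)$-sequence realize too many patterns, contradicting the NIP of $\chi$. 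The main obstacle is this final case analysis, but since every auxiliary element introduced is a singleton, it reduces to the classical argument for NIP + OP $\Rightarrow$ SOP without any variable-length blow-up.
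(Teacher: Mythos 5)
Your high-level plan---start from Simon's one-variable instability theorem and then run the ``NIP $+$ NSOP $\Rightarrow$ stable'' argument keeping all variables singletons---is in the same spirit as the paper, but the final step, which is where all the work actually lives, has a genuine gap.

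The paper first invokes Theorem~\ref{lachlan-1} to reduce to producing an SOP formula with merely $|y|=1$, and then argues \emph{by contradiction}: assuming no $L(\Mm)$-formula with $|y|=1$ has SOP, it proves the symmetry statement (Claim~\ref{claim1}) that for a $C$-indiscernible sequence of singletons $b_i$, the consistency of $\{\phi(x,b_{i_0}),\neg\phi(x,b_{i_1})\}\cup q(x)$ is invariant under swapping $i_0$ and $i_1$. That symmetry, applied to transpositions, shows every Boolean pattern $p_S$ is consistent, contradicting NIP. The contradiction hypothesis is essential: it is what lets Claim~\ref{claim1} conclude ``otherwise $\phi(x,y)\wedge\psi(x)$ has the SOP.'' Your proposal abandons the contradiction framing in favor of a direct construction, and that is where things go wrong.

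Concretely: you note that if the chain of traces $\chi(\Mm,b_j)\cap\{a_k\}_k$ fails to ``lift'' to a chain of full fibers, there is a singleton $x_0$ with $\chi(x_0,b_0)\wedge\neg\chi(x_0,b_1)$, and after stretching you get an indiscernible sequence $(a_i,b_i,x_i)$. By indiscernibility $\chi(x_i,b_k)$ depends only on $\mathrm{sgn}(k-i)$, with value true at $k=i$ and false at $k=i+1$; so the only remaining freedom is whether it holds for $k<i$. In the case where it does (so $\chi(x_i,b_k)\iff k\le i$), your case~(a) asserts that the fibers $\chi(x_i,\Mm)$ form a strict chain in $i$. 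But this is exactly the ``lift'' problem you already flagged one level up: the traces of $\chi(x_i,\Mm)$ on $\{b_k\}_k$ are increasing in $i$, yet nothing forces the full fibers to be. You would need to repeat the argument, find a new singleton witnessing the failure, and so on, with no termination in sight. In the other case ($\chi(x_i,b_k)\iff k=i$), the ``diagonal'' pattern, your case~(b) claims a contradiction with NIP from ``too many patterns,'' but this is asserted, not proved: you would need to say which family of $\chi$-definable sets shatters which finite set, and the data at hand (the pattern of $\chi(a_i,b_k)$ and $\chi(x_i,b_k)$, but nothing about $\chi(a_k,x_i)$) does not obviously supply an IP witness. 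Finally, it is not even clear your dichotomy (a)/(b) is exhaustive. So the central ``short case analysis'' is precisely the missing argument, and I do not see a way to complete it along these lines without essentially reintroducing the symmetry-and-permutation machinery that the paper's proof (by contradiction, via Claim~\ref{claim1}) packages cleanly. Your use of Simon's theorem in place of Shelah's $|y|=1$ theorem plus the Lachlan reduction is a perfectly reasonable substitution and would also feed into the paper's contradiction argument without changes, since $\psi(x)$ in the proof of Claim~\ref{claim1} already has $|x|=1$; the issue is not the input formula but the structure of the argument that follows.
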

The proof of Theorem~\ref{nipcase} is really just the standard
argument that stability is NSOP plus NIP (see
\cite[Theorem~12.38]{P-book} or \cite[Theorem~2.67]{NIPguide}).
Nevertheless, we trace through the details for the sake of
completeness.
\begin{proof}
  By Theorem~\ref{lachlan-1}, it suffices to find an SOP
  $L(\Mm)$-formula $\phi(x,y)$ with $|y|=1$.  Suppose no such formula
  exists.  Then the following holds:
  \begin{claim} \label{claim1}
    Suppose $C \subseteq \Mm$ is small, $\{b_i\}_{i \in I}$ is a
    $C$-indiscernible sequence of singletons, $q(x)$ is a partial type over $C$, $\phi(x,y)$ is an $L(C)$-formula, and
    $i_0 < i_1$.  Then
    \begin{align*}
      &\{\phi(x,b_{i_0}),\neg \phi(x,b_{i_1})\} \cup q(x) \text{ is consistent} \\
      \text{ if and only if }
      &\{\neg \phi(x,b_{i_0}),\phi(x,b_{i_1})\} \cup q(x) \text{ is consistent}.
    \end{align*}
  \end{claim}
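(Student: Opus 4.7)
The plan is to argue by contradiction and produce an $L(\Mm)$-formula with the SOP and $|y|=1$, which directly contradicts the working assumption at the start of the proof of Theorem~\ref{nipcase}. The flavor of the argument is the same as the standard derivation of stability from NIP $+$ NSOP, specialized to a single parameter variable.

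First I would suppose the biconditional fails. Since replacing $\phi$ with $\neg\phi$ swaps the two sides, I may assume without loss of generality that $\{\phi(x,b_{i_0}),\neg\phi(x,b_{i_1})\}\cup q(x)$ is consistent while $\{\neg\phi(x,b_{i_0}),\phi(x,b_{i_1})\}\cup q(x)$ is inconsistent. By extending the indiscernible sequence I may also assume it is indexed by $\Nn$. The key step I would extract is: from the inconsistency, compactness produces a single $L(C)$-formula $\chi(x)$ implied by $q$ with $\chi(x)\wedge\phi(x,b_{i_1})\models\phi(x,b_{i_0})$; $C$-indiscernibility of $(b_i)$ then promotes this to $\chi(x)\wedge\phi(x,b_j)\models\phi(x,b_i)$ for every $i<j$.

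Next I would define $\psi(x,y):=\chi(x)\wedge\phi(x,y)$ and note that the previous step gives $\psi(\Mm,b_j)\subseteq\psi(\Mm,b_i)$ whenever $i<j$. For strictness I would invoke the consistency half of the dichotomy: it furnishes some $a\models q$ (hence $a\models\chi$) with $\phi(a,b_{i_0})\wedge\neg\phi(a,b_{i_1})$, and indiscernibility transports this witness to every consecutive pair $(b_i,b_{i+1})$. The resulting infinite strictly descending chain $\psi(\Mm,b_0)\supsetneq\psi(\Mm,b_1)\supsetneq\cdots$ gives an infinite strictly ascending chain of $\neg\psi$-sets, so $\neg\psi(x,y)$ has the SOP with $|y|=1$, the sought contradiction.

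The only step where I anticipate needing care is the compactness reduction from the partial type $q$ to a single formula $\chi$; without it, the descending chain lives only inside $q(\Mm)$ and cannot be packaged as a first-order witness to SOP in the single variable $y$. Everything else is routine transport-by-indiscernibility.
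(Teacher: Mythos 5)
Your proposal is correct and follows essentially the same route as the paper: compactness shrinks $q$ to a single $L(C)$-formula, indiscernibility propagates the one-directional inclusion along the sequence, and the resulting strictly monotone chain of $\phi\wedge\chi$-sets witnesses SOP in the single variable $y$. You are slightly more explicit than the paper about the WLOG swap, about naming $\psi=\chi\wedge\phi$, and about noting that a descending chain yields SOP for $\neg\psi$ rather than $\psi$; these are fair clarifications but not a different argument.
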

  \begin{claimproof}
    By compactness we can assume $q(x)$ is a single $L(C)$-formula
    $\psi(x)$.  If the claim failed, we would get \begin{align*} \text{either} \qquad &\phi(\Mm,b_{i_0})
    \cap \psi(\Mm) \subsetneq \phi(\Mm,b_{i_1}) \cap \psi(\Mm) \\ \text{ or} \qquad &\phi(\Mm,b_{i_0}) \cap \psi(\Mm) \supsetneq \phi(\Mm,b_{i_1})
    \cap \psi(\Mm).
    \end{align*}
    Either way, the formula $\phi(x,y) \wedge
    \psi(x)$ has the SOP by indiscernibility.
  \end{claimproof}
  Since $T$ is unstable, there is an unstable $L$-formula $\psi(x,y)$
  with $|y|=1$ \cite[Theorem~2.13]{Shelah}.  Take an indiscernible
  sequence $\{(a_i,b_i)\}_{i \in \Qq}$ such that
  \begin{equation*}
    \Mm \models \psi(a_i,b_j) \iff i < j.
  \end{equation*}
  Fix some $n$ and let $[n] = \{1,\ldots,n\}$.  For any $S \subseteq
  [n]$ let $p_S(x)$ be the type
  \begin{equation*}
    p_S(x) = \{\psi(x,b_i) : i \in [n] \cap S\} \cup \{\neg
    \psi(x,b_i) : i \in [n] \setminus S\}
  \end{equation*}
  \begin{claim}
    If $p_S$ is consistent and $\sigma$ is a permutation of $[n]$,
    then $p_{\sigma(S)}$ is consistent.
  \end{claim}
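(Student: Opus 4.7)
The plan is to reduce to a single adjacent transposition and then apply Claim~\ref{claim1} to flip one entry of $S$. Since any permutation of $[n]$ is a product of adjacent transpositions, it suffices to handle $\sigma = (k,\,k+1)$ for arbitrary $1 \le k < n$. If both $k$ and $k+1$ lie in $S$, or neither does, then $\sigma(S) = S$ and there is nothing to prove; so without loss of generality $k \in S$ and $k+1 \notin S$, and I set $S' := \sigma(S) = (S \setminus \{k\}) \cup \{k+1\}$.

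Next I set $C := \{b_j : j \in [n] \setminus \{k,k+1\}\}$ and let $q(x)$ be the partial type over $C$ obtained from $p_S$ by removing $\psi(x,b_k)$ and $\neg \psi(x,b_{k+1})$. Then $p_S = q(x) \cup \{\psi(x,b_k),\, \neg \psi(x,b_{k+1})\}$ and $p_{S'} = q(x) \cup \{\neg \psi(x,b_k),\, \psi(x,b_{k+1})\}$. The key step is to produce a $C$-indiscernible sequence of singletons in which $b_k$ and $b_{k+1}$ occur at two positions $i_0 < i_1$; Claim~\ref{claim1} applied to this sequence with the formula $\psi(x,y)$ and the partial type $q(x)$ then gives directly that $p_S$ is consistent if and only if $p_{S'}$ is consistent.

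For the sequence I would take $(b_r)_{r \in [k,k+1] \cap \Qq}$, the subsequence of the original $\Qq$-indexed indiscernible sequence whose indices lie in the closed interval $[k,k+1]$. Every element of $[n] \setminus \{k,k+1\}$ is an integer lying strictly below $k$ or strictly above $k+1$, so all indices of parameters in $C$ fall outside $[k,k+1]$; indiscernibility of the full sequence over $\emptyset$ then upgrades to $C$-indiscernibility of this subsequence, since the relative order between any increasing tuple of $b_r$'s and the fixed tuple of parameters in $C$ is completely determined by the two internal orderings. With this in hand, Claim~\ref{claim1} applied at $i_0 = k$ and $i_1 = k+1$ finishes the case of an adjacent transposition, and iteration completes the proof for general $\sigma$. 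The one step that genuinely needs care is the $C$-indiscernibility of the chosen subsequence, where the density of $\Qq$ together with the discreteness of $[n]$ inside $\Qq$ is essential; beyond that, the statement is a direct consequence of Claim~\ref{claim1}.
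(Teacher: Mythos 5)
Your proof is correct and is essentially the same as the paper's: both reduce to an adjacent transposition, split $p_S$ into the formulas at positions $k, k+1$ plus a remainder type $q(x)$ over $C = \{b_j : j \in [n]\setminus\{k,k+1\}\}$, and invoke Claim~\ref{claim1} with the indiscernible subsequence of $(b_r)_{r\in\Qq}$ lying in the gap between $k-1$ and $k+2$. The paper simply writes that subsequence as $\{b_j\}_{i-1 < j < i+2}$ instead of $(b_r)_{r\in[k,k+1]\cap\Qq}$ and does not spell out the trivial case $\sigma(S)=S$; these are cosmetic differences.
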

  \begin{claimproof}
    We may assume that $\sigma$ is the permutation $(i ~ i+1)$
    swapping $i$ and $i+1$.  Let
    \begin{equation*}
      q(x) = \{\psi(x,b_j) : j \in [n] \cap S, ~ j \notin \{i,i+1\}\} \cup
      \{\neg \psi(x,b_j) : j \in [n] \setminus S, ~ j \notin \{i,i+1\}\},
    \end{equation*}
    i.e., the part of $p_S$ not involving $i$ and $i+1$.  We must show
    \begin{equation*}
      \{\phi(x,b_i),\neg \phi(x,b_{i+1})\} \cup q(x) \text{ is consistent iff }
      \{\neg \phi(x,b_i),\phi(x,b_{i+1})\} \cup q(x) \text{ is consistent.}
    \end{equation*}
    This follows by Claim~\ref{claim1}, applied to the parameter set
    $C = \{b_j : j \in [n] \setminus \{i,i+1\}\}$ and the
    $C$-indiscernible sequence $\{b_j\}_{i-1 < j < i+2}$.
  \end{claimproof}
  Every $S \subseteq [n]$ has the form $\sigma([i])$ for some $i \le
  n$ and permutation $\sigma : [n] \to [n]$.  Since $p_{[i]}(x)$ is
  realized by $a_{i+0.5}$, it follows that any $p_S$ is consistent.
  As $n$ and $S$ were arbitrary, this contradicts NIP.
\end{proof}

\section{Switchboards} \label{sec4}
\subsection{Motivation} \label{s4.1}
We want to produce a theory $T$ such that
\begin{itemize}
\item some formula $\phi(x,y)$ has
the SOP with $|y| = n > 1$
\item no $L(M)$-formula $\phi(x,y)$ with
$|y|=1$ has the SOP.
\end{itemize}
We may as well take $n=2$.  Then $\phi$ determines a partial order $<$
on $M^2$ with infinite height.  For any fixed $a \in M$, we can
restrict this partial order to $\{a\} \times M$.  The resulting
partial order must have finite height, or some one-variable formula
witnesses SOP.  Then $(\{a\} \times M, <)$ must be a finite union of
antichains.  We may as well require $\{a\} \times M$ to be a single
antichain.  Similarly, we may as well require $M \times \{a\}$ to be
a single antichain.

So we are now considering the theory of structures $(M,<)$ where
$<$ is a partial order on $M^2$, such that each set of the form
$\{a\} \times M$ or $M \times \{a\}$ is an antichain.  The next
natural move is to take the model companion, cross our fingers, and
hope for everything to work out.

This is the strategy we will follow, but with one further
twist: we take the order $<$ on the set of \emph{unordered pairs} rather
than \emph{ordered pairs} to minimize the number of cases that must be
checked in the proof.

\subsection{The definitions and goal} \label{defgoal}
If $M$ is a set, let $[M]^2$ be the set of 2-element subsets of $M$ (not including singletons).
We refer to elements of $[M]^2$ as \emph{edges}, thinking of $M$ as a
complete graph.
\begin{definition}
  A \emph{switchboard} is a structure $(M,<)$, where $<$ is a partial
  order on $[M]^2$, such that $\{x,y\}$ and $\{x,z\}$ are incomparable
  for any distinct $x,y,z \in M$.  In other words, the set of edges
  incident to $x$ is an antichain, for any $x \in M$.  We refer to
  this condition as the \emph{Switchboard Axiom}.
\end{definition}
Officially, we regard switchboards as structures in a language $L^-$
with a 4-ary relation symbol ${<}(x,y,z,w)$ interpreted as
\begin{equation*}
  x \ne y \wedge z \ne w \wedge \{x,y\} < \{z,w\}.
\end{equation*}
In practice, we will \emph{always} regard $<$ as a binary relation on
$[M]^2$, and \emph{never} think of it as a binary relation on $M^2$.
\begin{remark}\label{predet}
  A switchboard structure on $M$ is determined by the information of
  whether $\{x,y\} < \{z,w\}$ holds, for distinct $x,y,z,w \in M$,
  because $\{x,y\} < \{z,w\}$ can only hold when $x,y,z,w$ are all
  distinct.

  For example, there is a unique switchboard structure on any
  3-element set $M$, since we cannot find any four distinct elements
  $x,y,z,w \in M$.
\end{remark}
\begin{theorem}
  \begin{enumerate}
  \item The theory of switchboards has a model companion $T^-$.
  \item $T^-$ is $\aleph_0$-categorical and has the SOP.
  \item If $M \models T^-$ and $\phi(x,y)$ is an $L(M)$-formula with
    $|y|=1$, then $\phi$ is NSOP.
  \end{enumerate}
\end{theorem}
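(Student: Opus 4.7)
The plan is to prove the three claims in sequence, with claim (3) being by far the main content.

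For (1), a Fraïssé construction does not apply directly, because the class of finite switchboards fails amalgamation. For instance, over a base $A$ with $\{p,q\}<_A\{r,s\}$, the one-point extensions $B=A\cup\{y\}$ with $\{x,y\}<_B\{p,q\}$ and $C=A\cup\{z\}$ with $\{r,s\}<_C\{x,z\}$ are each valid switchboards, but no amalgam exists: transitivity forces $\{x,y\}<\{x,z\}$, violating the Switchboard Axiom. I would therefore axiomatize $T^-$ directly by the switchboard axioms together with a scheme of $\forall\exists$-axioms, one for each 1-point extension type $p(y/\bar a)$, asserting that $p$ is realized in $M$ provided no finite configuration in $M$ forces a transitivity-induced Switchboard-Axiom violation when a realization of $p$ is appended to $M$. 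The obstructing configurations are themselves first-order describable (they amount to chains in $M$ linking the relations specified by $p$ back to an edge incident to one of the new $y$-edges), so the scheme is genuinely a first-order theory. Consistency is then checked by inductively building a countable structure realizing every non-obstructed type, and any model of the scheme is verified to be existentially closed.

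Claim (2) then follows by standard arguments. For $\aleph_0$-categoricity: on any $n$ vertices there are only finitely many switchboard structures, hence finitely many quantifier-free types over $\emptyset$, and the extension axioms pin down the full types, so Ryll-Nardzewski applies (equivalently, a back-and-forth between two countable models succeeds because the obstructed types match on both sides at each stage). The SOP is witnessed by the formula ${<}(x_1,x_2;y_1,y_2)$, since the extension axioms allow one to build an infinite $<$-chain of pairwise-disjoint edges vertex by vertex.

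Claim (3) is the heart of the paper. Suppose for contradiction that some $L(M)$-formula $\phi(x,y)$ with $|y|=1$ has the SOP; let $\bar c$ list its parameters, extract a $\bar c$-indiscernible witnessing chain $\phi(M,b_0)\subsetneq\phi(M,b_1)\subsetneq\cdots$ of singletons, and fix $a\in\phi(M,b_1)\setminus\phi(M,b_0)$. The aim is to produce $a'\in M$ with $\phi(a',b_0)\wedge\neg\phi(a',b_1)$, which contradicts $\phi(M,b_0)\subseteq\phi(M,b_1)$. By model-completeness of $T^-$, the type $\tp(ab_0b_1/\bar c)$ is determined by the switchboard structure on $\{a,b_0,b_1,\bar c\}$, and the decisive asset is that the edge $\{b_0,b_1\}$ is itself unordered, so atomic relations involving this single edge are automatically symmetric in $b_0,b_1$; the only asymmetries between $b_0$ and $b_1$ come from cross-edges of the form $\{b_0,c_m\}$ versus $\{b_1,c_n\}$ with $m\ne n$. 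I would leverage the extension axioms of $T^-$ (and the obstruction analysis from (1)) to realize in $M$ the ``swapped'' type over $\{b_0,b_1,\bar c\}$, yielding the desired $a'$.

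The main obstacle is precisely this last step: cross-edge asymmetries do genuinely occur in $T^-$, so this is not a one-line symmetry argument as in Example~\ref{between}. A meticulous case analysis of the possible cross-edge configurations, verifying that the swapped type is non-obstructed in $M$ (using the chain structure of $\{b_i\}$ and the extension axioms in concert), is required. This is the unpleasant bookkeeping the author warns about in the introduction.
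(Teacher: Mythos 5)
Your outline correctly identifies that switchboards fail AP and that the cross-edge asymmetries are the crux of part (3), but it has gaps in both the setup and the main argument.

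On part (1): you propose axiomatizing $T^-$ directly with extension axioms asserting ``realize every non-obstructed one-point type.'' Whether this is genuinely first-order is not obvious: an obstruction to appending a vertex $y$ with prescribed cross-edge relations could a priori come from a $<$-chain of unbounded length already present in $M$, so you would need to prove a bound on obstruction length. More importantly, this direct route misses the central trick of the paper: pass to an expansion $L^+$ with two new relations $\uparrow, \downarrow$ (``labeled switchboards''), in which the obstruction data is recorded locally, AP is restored, and the Fra\"isse limit $T^+$ has quantifier elimination. One then shows $\uparrow$ and $\downarrow$ are both existentially and universally definable from $<$, so $T^+$ is a definitional expansion of its $L^-$-reduct $T^-$, and $T^-$ is the model companion.

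On part (3): there is a genuine error. You assert that by model-completeness, $\tp(ab_0b_1/\bar c)$ is determined by the switchboard structure on the finite set $\{a,b_0,b_1,\bar c\}$. That is exactly quantifier elimination, and the paper's Remark~\ref{no-amalg} proves $T^-$ does \emph{not} have QE; the type also depends on $\uparrow$/$\downarrow$ data (i.e., on whether certain witnesses exist elsewhere in $M$). Moreover, the ``realize the swapped type'' plan fails for consecutive $b_0,b_1$: if $\tp(b_0,b_1/\bar c)\ne\tp(b_1,b_0/\bar c)$, the swap is not induced by an automorphism, and there is no reason the swapped type should be realized. The paper's actual argument is different and this is where the real idea lies: build a sequence $c_0,c_1,\dots$ where each $c_ic_{i+1}$ realizes the same two-type and where $c_i$ is \emph{freely amalgamated} with $c_0$ over $Bc_{i-1}$; then show (Proposition~\ref{core}) that after $|B|$ steps $\tp(c_0,c_i/B)$ becomes ``distinguished'' (any cross-edge comparison factors through an edge in $[B]^2$, by a pigeonhole on the intermediate $b_j$'s), and after $|B|+1$ steps it becomes fully symmetric (Lemma~\ref{yuck}). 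Only then does the automorphism-swap contradiction go through, since $\phi(\Mm,c_0)\subsetneq\phi(\Mm,c_n)$ together with $c_0c_n\equiv_B c_nc_0$ is absurd. Your ``meticulous case analysis of cross-edge configurations'' gestures at this but omits the two load-bearing ideas: (a) track the $\uparrow/\downarrow$ information, not just the poset restricted to the finite set, and (b) work with a long chain built by free amalgamation rather than with a single consecutive pair.
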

The proof occupies the rest of this paper.

\subsection{Labeled switchboards} \label{beginproof}
Unfortunately, the theory $T^-$ fails to have quantifier elimination,
because the class of switchboards does not have the amalgamation
property (by Remark~\ref{no-amalg} below).  We must first work in an
expanded language, construct its model companion, then relate it back
to the original setting.  In this section, we define the expanded language.
\begin{definition} \label{lsdef}
  A \emph{labeled switchboard} is a structure
  $(M,<,\uparrow,\downarrow)$ where
  \begin{enumerate}
  \item $(M,<)$ is a switchboard.  In particular, $<$ is an order on
    $[M]^2$.
  \item $\uparrow$ and $\downarrow$ are binary relations between $M$
    and $[M]^2$.
  \item (Trichotomy Axiom) \label{trichotomy} For any $a \in M$ and $\{b,c\} \in [M]^2$, exactly one of the
    following holds:
    \begin{itemize}
    \item $a \uparrow \{b,c\}$
    \item $a \in \{b,c\}$
    \item $a \downarrow \{b,c\}$.
    \end{itemize}
\item (Upward Axiom) \label{upward} If $a \uparrow \{b,c\}$ and $\{b,c\} < \{b',c'\}$, then $a
  \uparrow \{b',c'\}$.
\item (Downward Axiom) \label{downward} If $a \downarrow \{b,c\}$ and $\{b,c\} > \{b',c'\}$, then $a
  \downarrow \{b',c'\}$.
  \end{enumerate}
\end{definition}

We pronounce $x \uparrow \{y,z\}$ as ``$x$ favors $\{y,z\}$'' and $x \downarrow \{y,z\}$ as ``$x$ disfavors $\{y,z\}$.''  Officially, we regard labeled switchboards as structures in a language
$L^+$ with a 4-ary relation symbol $<$ and two 3-ary relation symbols
$\uparrow$ and $\downarrow$.  Note that $L^+$ expands $L^-$.  In practice, we will \emph{always} regard $\uparrow, \downarrow$ as binary relations between $M$ and $[M]^2$, and \emph{never} as binary relations between $M$ and $M^2$.
\begin{remark} \label{tripartite}
  The last three axioms of labeled switchboards can be understood as
  saying that for each element $a$, we have a partition of the poset
  $([M]^2,<)$ into three sets:
  \begin{itemize}
  \item An upward-closed set $\{\{x,y\} \in [M]^2 : a \uparrow
    \{x,y\}\}$.
  \item The antichain $\{\{x,y\} \in [M]^2 : a \in \{x,y\}\}$.
  \item A downward-closed set $\{\{x,y\} \in [M]^2 : a \downarrow
    \{x,y\}\}$.
  \end{itemize}
\end{remark}
\begin{remark} \label{determination}
  A labeled switchboard structure with underlying set $M$ is
  determined by the following data:
  \begin{itemize}
  \item How $\{x,y\}$ and $\{z,w\}$ compare, for distinct $x,y,z,w \in
    M$.
  \item Whether $x \uparrow \{y,z\}$ or $x \downarrow \{y,z\}$ holds,
    for distinct $x,y,z \in M$.
  \end{itemize}
  For example, there are eight different labeled switchboard structures on a
  three-element set $\{a,b,c\}$, since the relation $<$ between edges
  can never hold, and then the $\uparrow$ and $\downarrow$ relations
  can be chosen freely.
\end{remark}
\begin{observation} \label{circles}
  Let $P$ be a poset partitioned into three sets $D \sqcup A \sqcup U$, where $A$ is an antichain, $D$ is downward closed, and $U$ is upward closed.  If $a \in A$ and $a < x$, then $x \in U$.  Indeed,
  \begin{itemize}
  \item $x \notin A$ since $A$ is an antichain.
  \item $x \notin D$, or else $a < x$ implies $a \in D$, contradicting
    the fact that $a \in A$.
  \end{itemize}
\end{observation}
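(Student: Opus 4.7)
The statement is a clean trichotomy argument using the partition $P = D \sqcup A \sqcup U$, so my plan is to rule out the possibilities $x \in A$ and $x \in D$, leaving $x \in U$. This is essentially the reasoning already indicated by the bullet points in the observation, and I would expand it into a two-line proof.

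First, I would note that because $P$ is the disjoint union $D \sqcup A \sqcup U$, the element $x$ lies in exactly one of $D$, $A$, or $U$. The goal is then to eliminate the first two cases. Suppose $x \in A$. Then $a$ and $x$ both belong to the antichain $A$, but $a < x$ contradicts the antichain property of $A$. Hence $x \notin A$. Next, suppose $x \in D$. Since $D$ is downward closed and $a < x$, we would have $a \in D$ as well, but $a \in A$ and $D \cap A = \emptyset$, again a contradiction. Hence $x \notin D$.

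The only remaining possibility is $x \in U$, which is what we wanted. The main (and really only) obstacle is just being careful that the hypotheses on $D$, $A$, $U$ are used exactly once each: antichain-ness of $A$ kills the case $x \in A$, downward-closedness of $D$ kills the case $x \in D$, and disjointness of the partition delivers $x \in U$ by elimination. No use is made of the upward-closedness of $U$ in this direction, which is expected since we are concluding $x \in U$ rather than deducing anything from membership in $U$.
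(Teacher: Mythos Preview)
Your proof is correct and follows exactly the same elimination argument as the paper's own justification embedded in the bullet points: rule out $x \in A$ by the antichain property, rule out $x \in D$ by downward closure, and conclude $x \in U$ by the partition. Your additional remark that the upward-closedness of $U$ is not used here is also accurate.
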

\begin{remark} \label{proto}
  In a labeled switchboard,
  \begin{gather*}
    \{a,x\} < \{y,z\} \implies a \uparrow \{y,z\} \\
    \{a,x\} > \{y,z\} \implies a \downarrow \{y,z\}.
  \end{gather*}
  For example, the first line follows by applying
  Observation~\ref{circles} to the partition from
  Remark~\ref{tripartite}: the element $\{a,x\}$ belongs to the
  antichain for $a$, so $\{y,z\}$ must be in the upward-closed set for
  $a$.

  Later, in the model companion, it will turn out that
  \begin{gather*}
    a \uparrow \{y,z\} \iff \exists x : \{a,x\} < \{y,z\} \\
    a \downarrow \{y,z\} \iff \exists x : \{a,x\} > \{y,z\},
  \end{gather*}
  so the two relations $\uparrow$ and $\downarrow$ will be definable
  from $<$.  This will be the bridge between labeled and unlabeled
  switchboards.
\end{remark}
\begin{proposition} \label{convert}
  Every switchboard $(M,<)$ can be expanded to a labeled switchboard
  $(M,<,\uparrow,\downarrow)$.
\end{proposition}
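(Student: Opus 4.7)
The plan is to fix a switchboard $(M,<)$ and, for each $a \in M$ separately, construct the two relations $\uparrow$ and $\downarrow$ restricted to $\{a\} \times [M]^2$. By Remark~\ref{tripartite}, this amounts to partitioning $[M]^2$ as $D_a \sqcup A_a \sqcup U_a$, where $A_a = \{e \in [M]^2 : a \in e\}$, $D_a$ is downward closed in $([M]^2,<)$, and $U_a$ is upward closed; declaring $a \downarrow e \iff e \in D_a$ and $a \uparrow e \iff e \in U_a$ then translates directly into the trichotomy, upward, and downward axioms. So the problem reduces to producing such a tripartition for each $a$, independently.

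I would define the ``forced'' portions of the partition guided by Remark~\ref{proto}: set
\begin{equation*}
  D_a^0 := \{e \in [M]^2 : e < \{a,x\} \text{ for some } x \ne a\}, \quad U_a^0 := \{e \in [M]^2 : \{a,x\} < e \text{ for some } x \ne a\}.
\end{equation*}
Using the switchboard axiom (edges through $a$ form an antichain) together with transitivity, one quickly checks that $A_a$, $D_a^0$, $U_a^0$ are pairwise disjoint---any overlap would manufacture a strict comparison between two edges incident to $a$---and that $D_a^0$ is downward closed while $U_a^0$ is upward closed.

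It can still happen that the edges $N_a := [M]^2 \setminus (A_a \sqcup D_a^0 \sqcup U_a^0)$, which are incomparable to every edge through $a$, need to be assigned to one of the two sides by hand. The simplest choice is to dump them all into the upward-closed part: take $D_a := D_a^0$ and $U_a := U_a^0 \cup N_a$. Downward closure of $D_a$ is already in hand, so the only computation left is upward closure of the enlarged $U_a$. Suppose $u \in U_a$ and $u < v$. If $v \in A_a$, say $v = \{a,y\}$, then $u < \{a,y\}$ puts $u$ into $D_a^0$, contradicting $u \notin D_a^0$; and $v \in D_a^0$ via $v < \{a,x\}$ gives $u < v < \{a,x\}$, again forcing $u \in D_a^0$. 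Hence $v \in U_a^0 \cup N_a = U_a$, as required.

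The argument is entirely elementary; the only mild subtlety is seeing that the free choice of what to do with $N_a$ does not destroy upward closure, which is exactly the little computation just described. I do not anticipate any genuine obstacle.
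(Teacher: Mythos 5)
Your proof is correct and is essentially the paper's argument, run with the opposite ``default.'' The paper's proof (via Observation~\ref{trivial}) defines $a \uparrow e$ to hold exactly when some edge through $a$ is below $e$, and then sets $\downarrow$ to be everything else not in $A_a$; that is, the paper dumps the edges incomparable to everything through $a$ (your $N_a$) into the \emph{downward}-closed piece, whereas you dump them into the \emph{upward}-closed piece. Both choices yield a valid labeled switchboard, and your verification (disjointness of the forced parts, closure of the enlarged side) is essentially the check that Observation~\ref{trivial} packages in one step.

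One remark worth knowing: the paper's choice to treat $\downarrow$ as the default is deliberate and is flagged in Remark~\ref{asymmetry}, because the same asymmetry reappears in the definition of free amalgamation (Definition~\ref{freedef}, Condition~\ref{b4}, where new edges are always disfavored). For Proposition~\ref{convert} alone the direction is immaterial, but if you continued into the amalgamation machinery you would want to align your convention with the paper's to keep the later bookkeeping consistent.
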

\begin{proof}
  For distinct $a,x,y \in M$,
  \begin{itemize}
  \item let $a \uparrow \{x,y\}$ hold if there is $z \in M$ such that
    $\{a,z\} < \{x,y\}$.
  \item let $a \downarrow \{x,y\}$ hold otherwise.
  \end{itemize}
  Then $(M,<,\uparrow,\downarrow)$ is a labeled switchboard by
  Observation~\ref{trivial} below.
\end{proof}
\begin{observation} \label{trivial}
  Let $(P,<)$ be a poset and $A \subseteq P$ be an antichain.  If we let
  \begin{align*}
    U &= \{x \in P \mid \exists a \in A : x > a\} \\
    D &= P \setminus (U \cup A)
  \end{align*}
  then $U$ is upward-closed, $D$ is downward-closed, and $U \sqcup A
  \sqcup D$ is a partition of $P$.
\end{observation}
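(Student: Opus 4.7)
The plan is to verify each of the three assertions directly from the definitions, using only transitivity of $<$ and the antichain property of $A$. First, for the partition claim: since $D = P \setminus (U \cup A)$ by construction, it suffices to check that $U \cap A = \emptyset$. If some $x \in A$ lay in $U$, there would exist $a \in A$ with $x > a$; but then $x$ and $a$ are two comparable elements of the antichain $A$, forcing $x = a$, which contradicts irreflexivity of the strict partial order.

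For the closure conditions, upward-closedness of $U$ is immediate from transitivity: if $x > a$ with $a \in A$ and $y > x$, then $y > a$, so $y \in U$. For downward-closedness of $D$ I would argue contrapositively: given $y < x$ with $y \in U \cup A$, I want to conclude $x \in U$. The case $y \in A$ follows directly from the definition of $U$ by taking the witness to be $y$ itself, and the case $y \in U$ follows by transitivity applied to a witness $a \in A$ with $y > a$. No step poses any real obstacle — the observation is labeled \emph{trivial} precisely because everything unfolds immediately from the definitions.
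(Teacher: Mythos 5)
Your proof is correct, and since the paper gives no explicit proof for this observation (it is stated without one, relying on its triviality), your direct verification is exactly the natural argument one would supply. All three checks are sound: $U \cap A = \emptyset$ follows from the antichain property plus irreflexivity, upward-closedness of $U$ is immediate from transitivity, and your contrapositive argument for downward-closedness of $D$ correctly handles both cases $y \in A$ and $y \in U$, in fact proving the slightly stronger conclusion $x \in U$ in each case.
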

\begin{remark} \label{asymmetry}
  The axioms of labeled switchboards are symmetric between $\uparrow$
  and $\downarrow$, but the proof of Proposition~\ref{convert} broke
  this symmetry by treating $\downarrow$ as the default option.  This
  theme will continue in the next section.
\end{remark}

\subsection{Single-element free amalgamation} \label{sefa}
In this section, we show that labeled switchboards can be amalgamated,
which will help construct the model companion in the next section.
Moreover, they can be amalgamated in a specific way (``freely'') which
will be useful in the proof that $\phi(x,y)$ is NSOP whenever $|y|=1$.

First, we reformulate the definition of ``labeled switchboard'' in a
way that is asymmetric and strange, but easier to use for the proof of
free amalgamation.
\begin{lemma} \label{reformulation}
  Let $M$ be a labeled switchboard.  Let $\lhd$ be the binary relation
  on $M \sqcup [M]^2$ defined by $x \lhd y$ if and only if either $x \uparrow
  y$ (so $x \in M$ and $y \in [M]^2$) or $x < y$ (so $x,y \in [M]^2$).  Then $\lhd$ satisfies the following axioms:
  \begin{enumerate}[label=(\arabic*)]
  \item \label{s1} $\lhd$ is transitive.
  \item \label{s2} If $\{x,y\} \lhd \{z,w\}$ then $x \lhd \{z,w\}$.
  \item \label{s3} If $a \lhd b$, then $b \in [M]^2$ (rather than $b
    \in M$).
  \item \label{s4} $\lhd$ is irreflexive.
  \item \label{s6} $x \centernot{\lhd} \{x,y\}$ for $x,y \in M$.
  \end{enumerate}
  Conversely, any relation $\lhd$ satisfying Axioms~\ref{s1}--\ref{s6}
  determines a labeled switchboard structure on $M$.
\end{lemma}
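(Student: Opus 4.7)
The plan is to verify each direction by a straightforward unpacking of definitions and case analysis; the only point that requires real care is the Downward Axiom in the converse direction, which is where Axiom~\ref{s2} earns its keep. For the forward direction, I would define $\lhd$ as in the statement and then check the axioms in turn. Axioms~\ref{s3}, \ref{s4}, and \ref{s6} are essentially type-theoretic: since $\uparrow\, \subseteq M \times [M]^2$ and ${<}\, \subseteq [M]^2 \times [M]^2$, the right-hand side of $\lhd$ always lies in $[M]^2$; irreflexivity holds because $<$ is strict and the two constituent relations live on disjoint domains; and $x \centernot{\lhd} \{x,y\}$ is an instance of the Trichotomy Axiom. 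Axiom~\ref{s2} is precisely the content of Remark~\ref{proto}. For transitivity~\ref{s1}, Axiom~\ref{s3} reduces us to two cases: $x \in [M]^2$ (handled by transitivity of $<$) and $x \in M$ (handled by the Upward Axiom).

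For the converse, given $\lhd$ satisfying \ref{s1}--\ref{s6}, I would recover the data by declaring $x < y$ on edges to mean $x \lhd y$, setting $a \uparrow \{b,c\}$ to mean $a \lhd \{b,c\}$ (which by Axiom~\ref{s6} automatically forces $a \notin \{b,c\}$), and setting $a \downarrow \{b,c\}$ to mean $a \notin \{b,c\}$ together with $a \centernot{\lhd} \{b,c\}$. Then $<$ is a strict partial order by Axioms~\ref{s1} and \ref{s4}; the Switchboard Axiom follows because $\{x,y\} \lhd \{x,z\}$ would give $x \lhd \{x,z\}$ by Axiom~\ref{s2}, contradicting Axiom~\ref{s6}; the Trichotomy Axiom is immediate from the definitions; and the Upward Axiom is a direct use of Axiom~\ref{s1}. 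A brief unwinding of definitions then shows the two constructions are mutually inverse.

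The step I expect to be the main obstacle is the Downward Axiom. Given $a \downarrow \{b,c\}$ and $\{b',c'\} \lhd \{b,c\}$, I must show both $a \centernot{\lhd} \{b',c'\}$ and $a \notin \{b',c'\}$. The former follows from transitivity~\ref{s1}: $a \lhd \{b',c'\} \lhd \{b,c\}$ would yield $a \lhd \{b,c\}$, contradicting $a \downarrow \{b,c\}$. The subtler point is the second: if $a \in \{b',c'\}$, then $\{b',c'\} = \{a,d\}$ for some $d$, so $\{a,d\} \lhd \{b,c\}$, and Axiom~\ref{s2} then forces $a \lhd \{b,c\}$, again contradicting $a \downarrow \{b,c\}$. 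This is also what explains the asymmetry flagged in Remark~\ref{asymmetry}: the shape of $\lhd$ privileges $\uparrow$ over $\downarrow$, and Axiom~\ref{s2} is tailored precisely so that the downward-closure property translates automatically under $\lhd$ without needing a dual companion axiom.
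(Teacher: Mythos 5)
Your proof is correct and follows essentially the same route as the paper: the forward direction by unwinding the definition (with Axiom~\ref{s2} identified as Remark~\ref{proto} and transitivity split into the two cases by Axiom~\ref{s3}), and the converse by recovering $<$, $\uparrow$, $\downarrow$ in the same way and checking the axioms, with the Downward Axiom handled by the same two-subcase argument using transitivity and Axiom~\ref{s2}. Your closing observation about the asymmetry being baked into the shape of $\lhd$ matches the point the paper makes in Remark~\ref{asymmetry}.
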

\begin{proof}
  First suppose we have a labeled switchboard.  Axiom \ref{s1} holds
  either because $<$ is a transitive relation on $[M]^2$ or because of
  the Upward Axiom for labeled switchboards: $x \uparrow \{y,z\} <
  \{v,w\} \implies x \uparrow \{v,w\}$.  Axiom \ref{s2} is
  Remark~\ref{proto}.  Axiom \ref{s3} is obvious. Axiom
  \ref{s4} holds because $<$ is irreflexive.  Axiom
  \ref{s6} is part of the Trichotomy Axiom for labeled switchboards.

  Conversely, suppose $\lhd$ is given.  Define
  \begin{align*}
    \{x,y\} < \{z,w\} &\iff \{x,y\} \lhd \{z,w\} \\
    x \uparrow \{y,z\} &\iff x \lhd \{y,z\} \\
    x \downarrow \{y,z\} &\iff (x \notin \{y,z\} \text{ and } x \centernot{\uparrow} \{y,z\}).
  \end{align*}
  Then $<$ is certainly a partial order on $[M]^2$, because $\lhd$ is
  transitive and irreflexive.  If $\{x,y\} < \{x,z\}$ then Axiom
  \ref{s2} gives $x \lhd \{x,z\}$, contradicting Axiom \ref{s6}.  So
  the Switchboard Axiom holds.  Axiom \ref{s6} shows that the two
  cases $x \in\{y,z\}$ and $x \uparrow \{y,z\}$ are mutually
  exclusive, and so the Trichotomy Axiom is automatic by definition of
  $x \downarrow \{y,z\}$.  The Upward Axiom says
  \begin{equation*}
    x \lhd \{y,z\} \lhd \{u,v\} \implies x \lhd \{u,v\},
  \end{equation*}
  which holds by transitivity of $\lhd$.  For the Downward Axiom,
  suppose $x \downarrow \{y,z\} > \{u,v\}$.  We must show $x
  \downarrow \{u,v\}$.  Otherwise, one of two things happens:
  \begin{itemize}
  \item $x \uparrow \{u,v\}$.  Then $x \lhd \{u,v\} \lhd \{y,z\}$ so
    $x \lhd \{y,z\}$.
  \item $x \in \{u,v\}$.  Then $x \in \{u,v\} \lhd \{y,z\}$ so $x \lhd
    \{y,z\}$ by Axiom~\ref{s2}.
  \end{itemize}
  Either way, $x \lhd \{y,z\}$, so $x \uparrow \{y,z\}$, contradicting
  the fact that $x \downarrow \{y,z\}$.  Thus all the axioms of
  labeled switchboards hold.
\end{proof}
\begin{definition}
  A \emph{triangle relation} on $M$ is a relation $\lhd$ on $M \sqcup
  [M]^2$ satisfying the axioms in Lemma~\ref{reformulation}.\footnote{``Triangle'' refers to the shape of the symbol $\lhd$, and has no deeper meaning.}  We call
  these the \emph{triangle axioms}.
\end{definition}
Triangle relations are strict partial orders---they are transitive and
irreflexive.  Our strategy for amalgamating labeled switchboards will
essentially be to freely amalgamate the triangle relations as partial
orders.  For this reason, we first review how strict partial orders
can be amalgamated.
\begin{lemma} \label{poset-amalg}
  Let $X_1, X_2$ be two sets.  Let $<_i$ be a transitive relation on
  $X_i$ for $i = 1, 2$, such that $<_1$ and $<_2$ have the same
  restriction to $X_1 \cap X_2$.  Let $<$ be the transitive closure of
  the union of $<_1$ and $<_2$.
  \begin{enumerate}
  \item $<$ extends $<_i$ on $X_i$.
  \item If $a \in X_1 \setminus X_2$ and $b \in X_2 \setminus X_1$,
    then $a < b$ holds if and only if there is $c \in X_1 \cap X_2$
    such that $a <_1 c <_2 b$.
  \item If $a \in X_2 \setminus X_1$ and $b \in X_1 \setminus X_2$,
    then $a < b$ holds if and only if there is $c \in X_1 \cap X_2$
    such that $a <_2 c <_1 b$.
  \end{enumerate}
\end{lemma}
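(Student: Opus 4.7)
The plan is to characterize $a < b$ as the existence of a chain $a = a_0, a_1, \ldots, a_n = b$ in which each step $a_{j-1} <_{i_j} a_j$ holds for some color $i_j \in \{1,2\}$ (so $a_{j-1}, a_j \in X_{i_j}$). All three parts of the lemma will follow from analyzing what such chains can look like, and part (1) is the heart of the argument; parts (2) and (3) will fall out of it.

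For (1), suppose $a, b \in X_i$ and $a < b$; I would induct on the length $n$ of a witnessing chain. The base case $n = 1$ is a one-step chain $a <_{i'} b$: if $i' = i$ we are done, and otherwise $a, b \in X_i \cap X_{i'} = X_1 \cap X_2$, so the hypothesis that $<_1$ and $<_2$ agree on the intersection gives $a <_i b$. For the inductive step I would distinguish two cases according to whether some intermediate $a_j$ (with $0 < j < n$) lies in $X_i$. If yes, split the chain at $a_j$, apply the induction hypothesis to each half, and conclude by transitivity of $<_i$. If not, every intermediate $a_j$ lies in $X_{3-i} \setminus X_i$, which forces $i_j = 3-i$ for every step (each endpoint of each step must belong to $X_{i_j}$); in particular $a, b \in X_{3-i}$, the entire chain consists of $<_{3-i}$ steps, and transitivity of $<_{3-i}$ yields $a <_{3-i} b$. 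Since $a, b \in X_1 \cap X_2$, agreement again promotes this to $a <_i b$.

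For (2), suppose $a \in X_1 \setminus X_2$, $b \in X_2 \setminus X_1$, and $a < b$ via some chain. I would let $j$ be the largest index with $a_j \in X_1$; then $j < n$, and the step $a_j <_{i_{j+1}} a_{j+1}$ must use color $2$ (else $a_{j+1} \in X_1$, contradicting maximality of $j$), forcing $a_j \in X_2$ as well. Setting $c := a_j$, so $c \in X_1 \cap X_2$, part (1) applied to the subchain $a_0, \ldots, a_j$ (endpoints in $X_1$) and to the subchain $a_j, \ldots, a_n$ (endpoints in $X_2$) yields $a <_1 c$ and $c <_2 b$ respectively. The converse direction is immediate from the definition of the transitive closure, and (3) is proved identically with the roles of $X_1$ and $X_2$ swapped.

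The main obstacle is really just the bookkeeping of which $X_i$ each intermediate $a_j$ is forced to live in; this is always determined by the color of a neighboring step, so no delicate ideas are required, only care in checking cases.
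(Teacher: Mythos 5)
Your proof is correct, and it takes a somewhat different route from the paper's.  The paper proves a single unified claim: starting from a \emph{minimal-length} chain $a = z_0 <_0 z_1 <_0 \cdots <_0 z_n = b$, it observes that any intermediate $z_j$ lying in $X_1\setminus X_2$ or $X_2\setminus X_1$ could be dropped (both neighboring steps would have the same color), so all intermediates land in $X_1\cap X_2$; a short case analysis then forces $n\le 2$, from which all three parts of the lemma fall out simultaneously.  You instead prove part (1) first, by strong induction on chain length, splitting the chain at an intermediate in $X_i$ when one exists and otherwise reading off that every step has color $3-i$; you then obtain (2) and (3) by taking $j$ to be the largest index with $a_j\in X_1$, showing $a_j\in X_1\cap X_2$, and applying the already-established part (1) to the two subchains.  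Both arguments are standard and correct.  The paper's minimality argument is marginally tighter (one unified claim), while your layered version makes the logical dependence of (2) and (3) on (1) explicit, at the cost of a slightly longer exposition.  One small point worth stating explicitly in your write-up: in part (2) you should note that $j\ge 1$ (since $c = a_j \in X_2$ while $a = a_0 \notin X_2$), so that both subchains are genuinely nonempty before invoking part (1).
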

This is well-known, but we include the proof for completeness.
\begin{proof}
  It suffices to prove the following:
  \begin{claim}
    If $a < b$, then one of the following holds:
    \begin{enumerate}
    \item $a <_i b$ for $i=1$ or $2$.  In particular, $\{a,b\} \subseteq X_i$.
    \item $a \in X_1 \setminus X_2$ and $b \in X_2 \setminus X_1$ and
      $a <_1 c <_2 b$ for some $c \in X_1 \cap X_2$.
    \item $a \in X_2 \setminus X_1$ and $b \in X_1 \setminus X_2$ and
      $a <_2 c <_1 b$ for some $c \in X_1 \cap X_2$.
    \end{enumerate}
  \end{claim}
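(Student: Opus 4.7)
The plan is to take a witnessing chain for $a < b$ of minimum length: a sequence $a = x_0, x_1, \ldots, x_n = b$ with each consecutive pair satisfying $x_{k-1} <_{i_k} x_k$ for some $i_k \in \{1,2\}$, chosen with $n$ minimal. Using transitivity of $<_1$ and of $<_2$ separately, two consecutive steps with the same index can be merged into one, so minimality forces the indices to strictly alternate between $1$ and $2$.

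In the case $n=1$, conclusion~(1) of the claim is immediate. In the case $n=2$, say $a <_1 c <_2 b$ (the other order gives conclusion~(3) by symmetry); the middle vertex $c$ lies in $X_1 \cap X_2$. If $a$ also lay in $X_1 \cap X_2$, then the agreement of $<_1$ and $<_2$ on $X_1 \cap X_2$ would rewrite $a <_1 c$ as $a <_2 c$, giving $a <_2 b$ by transitivity of $<_2$, a one-step chain contradicting minimality. The same reasoning excludes $b \in X_1$, yielding conclusion~(2).

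The remaining case $n \geq 3$ is the one place that needs real work. Since the indices alternate, the interior vertices $x_1$ and $x_2$ are each incident to one $<_1$-edge and one $<_2$-edge, so $x_1, x_2 \in X_1 \cap X_2$. The middle step $x_1 <_{i_2} x_2$ may therefore be re-interpreted in $<_{3-i_2}$ by the agreement hypothesis; combined with its two neighboring steps (which are in $<_{3-i_2}$ by alternation) via transitivity of $<_{3-i_2}$, this collapses three consecutive steps into one, shortening the chain and contradicting minimality.

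Once the claim is proved, the three parts of the lemma follow directly: for part~(1), note that conclusions~(2) and~(3) force the endpoints into the symmetric difference of $X_1$ and $X_2$, so when $\{a,b\} \subseteq X_i$ only conclusion~(1) is possible (and if it delivers $<_{3-i}$, the agreement hypothesis converts it to $<_i$); parts~(2) and~(3) are immediate from conclusions~(2) and~(3) together with the forward implications, which are obvious.
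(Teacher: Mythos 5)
Your proof is correct and follows essentially the same strategy as the paper: take a minimal-length witnessing chain, observe that interior vertices lie in $X_1 \cap X_2$ (your observation that indices must alternate is an equivalent reformulation), and handle the cases $n=1$, $n=2$, $n\geq 3$ by shortening via transitivity and the agreement hypothesis. The paper's $n>2$ case is marginally shorter (it drops a single interior vertex using that $a, z_1, z_2$ all lie in the $X_i$ containing $a$, rather than collapsing three steps to one), but the two arguments are substantively identical.
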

  Let $<_0$ be the union of $<_1$ and $<_2$.  By construction of the
  transitive closure, there is a sequence
  \begin{equation*}
    a = z_0 <_0 z_1 <_0 \cdots <_0 z_n = b
  \end{equation*}
  with $n > 0$.  Take such a sequence with $n$ minimal.  If $z_j \in
  X_1 \setminus X_2$ for some $0 < j < n$, then we must have $z_{j-1}
  <_1 z_j <_1 z_{j+1}$ (since $z_j$ cannot satisfy $<_2$).  By
  transitivity of $<_1$, we can drop $z_j$ from the list,
  contradicting minimality.  Similarly, $z_j$ cannot be in $X_2
  \setminus X_1$ for $0 < j < n$.  Therefore, $z_j \in X_1 \cap X_2$
  for each $0 < j < n$.  Break into cases depending on $n$.
  \begin{itemize}
  \item $n=1$.  Then $a <_0 b$, so we are in Case 1 of the Claim.
  \item $n=2$.  Then $a <_0 z <_0 b$ for some $z \in X_1 \cap X_2$.
    If $a,b$ are both in $X_1$, then $a <_1 z <_1 b$, contradicting
    minimality.  So one of $a$ and $b$ is in $X_2 \setminus X_1$.
    Similarly, one is in $X_1 \setminus X_2$.  Then we are Case 2 or 3
    of the Claim.
  \item $n>2$.  Take $i$ such that $a \in X_i$.
  Then $a, z_1, z_2$ are in $X_i$, so $a <_i z_1 <_i z_2$.  Then $z_1$
  could be dropped from the list, contradicting minimality.  \qedhere
  \end{itemize}
\end{proof}

\begin{definition} \label{freedef}
  Let $M$ be a labeled switchboard.  Let $S$ be a subset and let $a_1,
  a_2$ be two distinct elements of $M \setminus S$.  Then $a_1$ and $a_2$ are
  \emph{freely amalgamated} over $S$ if the following holds:
  \begin{enumerate}[label=(\roman*)]
  \item \label{b1} If $x,y \in S$, then $\{a_1,x\} < \{a_2,y\}$ if and
    only if there is $\{p,q\} \in [S]^2$ with $\{a_1,x\} <
    \{p,q\}$ and $\{p,q\} < \{a_2,y\}$.
  \item \label{b2} If $x,y \in S$, then $\{a_2,y\} < \{a_1,x\}$ if and
    only if there is $\{p,q\} \in[S]^2$ with $\{a_2,y\} <
    \{p,q\}$ and $\{p,q\} < \{a_1,x\}$.
  \item \label{b3} The edge $\{a_1,a_2\}$ is incomparable to every
    other element of $[S \cup \{a_1,a_2\}]^2$.
  \item \label{b4} If $x \in S$, then $x \downarrow \{a_1,a_2\}$.
  \item \label{b5} If $x \in S$, then $a_1 \uparrow \{a_2,x\}$ if and
    only if there is $\{p,q\} \in [S]^2$ such that $a_1
    \uparrow \{p,q\}$ and $\{p,q\} < \{a_2,x\}$.
  \item \label{b6} If $x \in S$, then $a_2 \uparrow \{a_1,x\}$ if and only if
    there is $\{p,q\} \in [S]^2$ such that $a_2 \uparrow
    \{p,q\}$ and $\{p,q\} < \{a_1,x\}$.
  \end{enumerate}
\end{definition}

\begin{lemma} \label{amalg}
  Suppose $S, A_1, A_2$ are labeled switchboards such that
  \begin{itemize}
  \item $A_1$ and $A_2$ extend $S$.
  \item $A_i = S \cup \{a_i\}$ for $i = 1, 2$, for two distinct
    elements $a_1, a_2 \notin S$.
  \end{itemize}
  Let $M = A_1 \cup A_2 = S \cup \{a_1,a_2\}$.  Then we can make $M$
  into a labeled switchboard extending $A_1$ and $A_2$, in which $a_1$
  and $a_2$ are freely amalgamated over $S$.
\end{lemma}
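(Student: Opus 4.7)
The plan is to reformulate via the triangle-relation description from Lemma~\ref{reformulation}. Let $\lhd_i$ be the triangle relation associated with $A_i$, defined on $X_i := A_i \sqcup [A_i]^2$. Because $A_1$ and $A_2$ both extend $S$, the relations $\lhd_1$ and $\lhd_2$ agree on $X_1 \cap X_2 = S \sqcup [S]^2$. Note also that the new edge $\{a_1,a_2\}$ lies in neither $X_1$ nor $X_2$, and that if $\{x,y\} \in X_i$ then automatically $x, y \in X_i$.

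Define a relation $\lhd$ on $M \sqcup [M]^2$ by declaring $\{a_1,a_2\}$ to be $\lhd$-unrelated to everything (on both sides), and by letting $\lhd$ on $X_1 \cup X_2$ be the transitive closure of $\lhd_1 \cup \lhd_2$. Lemma~\ref{poset-amalg} then guarantees that $\lhd$ extends each $\lhd_i$ on $X_i$, and it characterizes any cross relation $a \lhd b$ with $a \in X_1 \setminus X_2$ and $b \in X_2 \setminus X_1$ (or vice versa) by the existence of a witness $c \in X_1 \cap X_2$ with $a \lhd_1 c \lhd_2 b$. Since each $\lhd_i$ satisfies axiom~\ref{s3}, this intermediate $c$ must be an edge, i.e., $c = \{p,q\} \in [S]^2$.

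The bulk of the work is verifying the triangle axioms for $\lhd$, after which Lemma~\ref{reformulation} produces the desired labeled switchboard on $M$. Axioms \ref{s1} and \ref{s3} are immediate from the construction. For \ref{s2}, a relation $\{x,y\} \lhd \{z,w\}$ coming from a single $\lhd_i$ yields $x \lhd_i \{z,w\}$ by \ref{s2} in $A_i$; a cross relation factors as $\{x,y\} \lhd_i \{p,q\} \lhd_{3-i} \{z,w\}$, so \ref{s2} inside $A_i$ combined with transitivity gives $x \lhd \{z,w\}$. For \ref{s4}, the relation $a \lhd a$ cannot occur as a cross relation (which would place $a$ on both sides of the amalgamation simultaneously), so it must reduce to $a \lhd_i a$, contradicting \ref{s4} in $A_i$. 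For \ref{s6}, the key observation is that whenever $\{x,y\} \in X_i$ we also have $x \in X_i$, so $x$ and $\{x,y\}$ never lie on opposite sides of the amalgamation; the cross-cases of Lemma~\ref{poset-amalg} do not apply and we reduce to \ref{s6} in some $A_i$.

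The free-amalgamation conditions \ref{b1}--\ref{b6} of Definition~\ref{freedef} then translate directly into statements about $\lhd$: conditions \ref{b3} and \ref{b4} are immediate from the isolation of $\{a_1,a_2\}$, while \ref{b1}, \ref{b2}, \ref{b5}, \ref{b6} are exactly the cross-case characterizations from Lemma~\ref{poset-amalg}, once one recalls that the intermediate witness is forced to be an edge in $[S]^2$. The main obstacle is the case analysis for the triangle axioms, but the triangle-relation reformulation does the heavy lifting by collapsing the whole amalgamation problem into a single transitive-closure construction.
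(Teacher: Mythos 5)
Your proposal is correct and follows essentially the same route as the paper: pass to triangle relations via Lemma~\ref{reformulation}, take the transitive closure of $\lhd_1 \cup \lhd_2$ and isolate the new edge $\{a_1,a_2\}$, invoke Lemma~\ref{poset-amalg} for the extension and cross-relation characterizations, verify the five triangle axioms, and read off Conditions~\ref{b1}--\ref{b6} from the cross-characterization and the isolation of $\{a_1,a_2\}$. Your observations for axioms~\ref{s4} and \ref{s6} (that $a \lhd a$ and $x \lhd \{x,y\}$ can never be cross relations since $x$ and $\{x,y\}$ always lie on a common side) are exactly the content of the paper's Claims~\ref{internal} and~\ref{nodice}, just phrased more compactly.
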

\begin{proof}
  We may assume $M \cap [M]^2 = \varnothing$, so the disjoint union $M
  \sqcup [M]^2$ is just an ordinary union $M \cup [M]^2$.  For $i=
  1,2$, let $\lhd_i$ be the triangle relation for $A_i$.  Let $\lhd$
  be the transitive closure of the union of $\lhd_1$ and $\lhd_2$.
  Then $\lhd$ is a relation on the set $A_1 \cup [A_1]^2 \cup A_2 \cup
  [A_2]^2$.  We can also regard $\lhd$ as a relation on the bigger set
  $M \cup [M]^2$, which has one new element $\{a_1,a_2\}$.  Thus
  \begin{claim} \label{nodice}
    $\{a_1,a_2\}$ satisfies no instances of $\lhd$.
  \end{claim}
  Note that $(A_1 \cup [A_1]^2) \cap (A_2 \cup [A_2]^2) = S \cup
  [S]^2$.  By Lemma~\ref{poset-amalg}, the following claims hold:
  \begin{claim} \label{internal}
    For $i = 1$ or $2$, $\lhd$ extends the original relation $\lhd_i$
    on $A_i \cup [A_i]^2$.
  \end{claim}
  \begin{claim} \label{bridge}
    If $x$ is in $(A_i \cup [A_i]^2) \setminus (S \cup [S]^2)$,
    and $y$ is in $(A_j \cup [A_j]^2) \setminus (S \cup [S]^2)$ for $i
    \ne j$, then $x \lhd y$ holds if and only if there is $z \in S
    \cup [S]^2$ such that $x \lhd_i z \lhd_j y$.
  \end{claim}
  We first check that $\lhd$ is a triangle relation on $M$, satisfying Axioms~\ref{s1}--\ref{s6}.
  \begin{enumerate}
  \item $\lhd$ is transitive: by construction.
  \item If $\{x,y\} \lhd \{z,w\}$, then $x \lhd \{z,w\}$: By
    construction of the transitive closure, there is some $i=1,2$ and
    edge $p$ such that $\{x,y\} \lhd_i p$ and either $p = \{z,w\}$ or
    $p \lhd \{z,w\}$.  The fact that $\{x,y\} \lhd_i p$ implies $x \lhd_i p$ because $\lhd_i$ itself
    satisfies Axiom \ref{s2} in the triangle axioms.  Since $x \lhd p$ and either $p =
    \{z,w\}$ or $p \lhd \{z,w\}$, it follows that $x \lhd \{z,w\}$ by
    transitivity of $\lhd$.
  \item If $a \lhd b$, then $b \in [M]^2$: By construction of the
    transitive closure there is $b'$ and $i$ such that $b' \lhd_i b$.
    Since $\lhd_i$ itself satisfies Axiom \ref{s3} of the triangle
    axioms, $b$ is in $[M]^2$ rather than $M$.
  \item $\lhd$ is irreflexive: Otherwise $a \lhd a$ for some $a$.
    Then $a \in A_i \cup [A_i]^2$ for some $i$, and $a \lhd_i a$ by
    Claim~\ref{internal}, contradicting the fact that $\lhd_i$ itself is
    irreflexive.
  \item $x \centernot{\lhd} \{x,y\}$ for $x,y \in M$: Suppose $x \lhd
    \{x,y\}$ for the sake of contradiction.
    \begin{itemize}
    \item If $\{x,y\} \subseteq A_i$ for some $i$, then $x \lhd_i
      \{x,y\}$ by Claim~\ref{internal}, contradicting the fact that
      $\lhd_i$ itself satisfies Axiom~\ref{s6} of the triangle axioms.
    \item Otherwise, $\{x,y\} = \{a_1,a_2\}$, and then $x \lhd
      \{a_1,a_2\}$ contradicts Claim~\ref{nodice}.
    \end{itemize}
  \end{enumerate}
  Finally, we check that Conditions~\ref{b1}--\ref{b6} in
  Definition~\ref{freedef} hold.
  \begin{enumerate}[label=(\roman*)]
  \item This says that $\{a_1,x\} \lhd \{a_2,y\}$ if and only if there
    is $p \in [S]^2$ such that $\{a_1,x\} \lhd_1 p \lhd_2 \{a_2,y\}$.
    This is an instance of Claim~\ref{bridge}, since $\{a_1,x\}$ is
    from $[A_1]^2 \setminus [S]^2$ and $\{a_2,y\}$ is from $[A_2]^2
    \setminus [S]^2$.
  \item Similar.
  \item This says that $p \centernot{\lhd} \{a_1,a_2\}$ and
    $\{a_1,a_2\} \centernot{\lhd} p$ for any $p \in [M]^2$.  This
    holds by Claim~\ref{nodice}.
  \item This says that $x \centernot{\lhd} \{a_1,a_2\}$ for any $x \in
    M$.  This holds by Claim~\ref{nodice}.
  \item This says that $a_1 \lhd \{a_2,x\}$ if and only if there is $p \in [S]^2$ such that $a_1 \lhd_1 p \lhd_2 \{a_2,x\}$.  This is another instance of Claim~\ref{bridge}.
  \item Similar.  \qedhere
  \end{enumerate}
\end{proof}
The reader can now safely forget about triangle relations.

\subsection{The model companion $T^+$} \label{whoops}
We can now construct the model companion of the class of labeled switchboards.
\begin{proposition}
  The class of finite labeled switchboards is a Fra\"isse class.
\end{proposition}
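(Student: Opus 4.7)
The plan is to verify the four defining conditions of a Fra\"isse class: countably many isomorphism types, the hereditary property (HP), the joint embedding property (JEP), and the amalgamation property (AP). The first holds because $L^+$ is a finite relational language, so there are only finitely many isomorphism types of finite structures of any bounded cardinality. HP is immediate because the axioms for a labeled switchboard (Switchboard, Trichotomy, Upward, Downward) are universal first-order sentences, so they pass to induced substructures. JEP is the special case of AP with $S = \varnothing$, which is vacuously a labeled switchboard.

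The real substance is AP, which I would deduce from the single-element amalgamation of Lemma~\ref{amalg} by a single induction on $n := |A_1 \setminus S| + |A_2 \setminus S|$. When $n = 0$ we take $M = S$. When $n \geq 1$ we may assume $|A_1 \setminus S| \geq 1$; and if $|A_2 \setminus S| = 0$ we simply take $M = A_1$. Thus the two nontrivial subcases are when both $|A_1 \setminus S|, |A_2 \setminus S| \geq 1$.

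In the first subcase $|A_1 \setminus S| = 1$ (with $a$ its unique element), pick $b \in A_2 \setminus S$ and let $A_2' = A_2 \setminus \{b\}$, a labeled switchboard by HP. The inductive hypothesis (applied with parameter strictly smaller by $1$) amalgamates $A_1$ and $A_2'$ over $S$ into a labeled switchboard $M'$ with underlying set $A_2' \cup \{a\}$. Then Lemma~\ref{amalg} applies directly to amalgamate $M'$ and $A_2$ over $A_2'$, because each contributes exactly one new element on top of the common base. In the remaining subcase $|A_1 \setminus S| \geq 2$, pick $a \in A_1 \setminus S$ and let $A_1' = A_1 \setminus \{a\}$; use induction to amalgamate $A_1'$ with $A_2$ over $S$ into some $M'$, and then invoke induction a second time to amalgamate $A_1$ with $M'$ over $A_1'$, valid because the new induction parameter $1 + |A_2 \setminus S|$ is strictly smaller than the starting value $|A_1 \setminus S| + |A_2 \setminus S|$ precisely when $|A_1 \setminus S| \geq 2$.

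The expected main obstacle is purely bookkeeping rather than any new geometric insight: for each application of Lemma~\ref{amalg} or of the inductive hypothesis, one must check that the relevant underlying sets, inclusions, and induced structures line up correctly, and that the induction parameter actually decreases. Once this bookkeeping is set up cleanly, no input beyond Lemma~\ref{amalg} and HP is required, and the Fra\"isse conditions are all verified.
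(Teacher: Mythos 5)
Your proposal is correct and follows essentially the same route as the paper: HP from universality of the axioms, JEP as AP over the empty structure, and AP by induction reducing to single-element extensions and invoking Lemma~\ref{amalg}. The only difference is that you spell out the induction bookkeeping (splitting into subcases and tracking the parameter $|A_1\setminus S|+|A_2\setminus S|$), which the paper leaves implicit; just note that the inductive hypothesis should be phrased so the amalgam lives on the union $A_1\cup A_2$ (as Lemma~\ref{amalg} in fact provides), so that the second application of the hypothesis in each subcase has the right form.
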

\begin{proof}~
  \begin{description}
  \item[Hereditary property:] Clear, since the axioms are
    $\forall$-sentences.
  \item[Amalgamation property:] Suppose we are amalgamating this picture:
    \begin{equation*}
      \xymatrix{& A \ar[dl] \ar[dr] & \\ B & & C.}
    \end{equation*}
    Since we are in a purely relational language, we can use induction
    on the size of $|B \setminus A|$ and $|C \setminus A|$ to reduce
    to the case where $|B \setminus A| = |C \setminus A| = 1$.  This
    case is handled by Lemma~\ref{amalg}.
  \item[Joint embedding property:] Use the amalgamation property over
    the empty labeled switchboard.  \qedhere
  \end{description}
\end{proof}
Let $M_0$ be the Fra\"isse limit of finite labeled switchboards.  Let
$T^+$ be the complete theory of $M_0$.  By general machinery, the
following hold:
\begin{itemize}
\item $T^+$ has quantifier elimination.
\item $T^+$ is complete and countably categorical.
\item $T^+$ is the model companion of labeled switchboards.
\item The class of labeled switchboards has the amalgamation property.
\end{itemize}

\subsection{Comparison to unlabeled switchboards} \label{compare}
Next, we show that $T^+$ is a definitional expansion of some theory $T^-$, which is the model companion of unlabeled switchboards.
\begin{lemma} \label{recover}
  If $x,y,z$ are distinct elements of a model $M$ of $T^+$, then
  \begin{gather*}
    x \uparrow \{y,z\} \iff \exists w \ne x : \{x,w\} < \{y,z\} \\
    x \downarrow \{y,z\} \iff \exists w \ne x : \{x,w\} > \{y,z\}.
  \end{gather*}
\end{lemma}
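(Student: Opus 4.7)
The backward implications are immediate from Remark~\ref{proto}: if $\{x,w\} < \{y,z\}$ with $w \ne x$, then $x \uparrow \{y,z\}$, and dually for $\downarrow$. The substance lies in the forward implications; I will describe the $\uparrow$ case and note that the $\downarrow$ case is symmetric.

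Since $T^+$ is the model companion of labeled switchboards, $M$ is existentially closed in this class. So it suffices to exhibit a labeled switchboard extension $M^* = M \cup \{w\}$ by one fresh element in which $\{x,w\} < \{y,z\}$: embedding $M^*$ into some $N \models T^+$ and invoking model completeness ($M \preceq N$), the existential statement transfers back to $M$.

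I will build $M^*$ as conservatively as possible. Keep all of $M$'s relations unchanged. Declare $\{x,w\} < e$ for every $e \in [M]^2$ with $e \ge \{y,z\}$ in $M$, and make every other new edge $\{a,w\}$ (with $a \in M \setminus \{x\}$) incomparable to every edge. For the new $\uparrow/\downarrow$ data, set $w \uparrow e$ precisely when $e \ge \{y,z\}$ (and $w \notin e$), and $w \downarrow e$ otherwise; and for each new edge $\{a',w\}$ and each $a \in M \setminus \{a',w\}$, declare $a \downarrow \{a',w\}$.

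The key point, and the only place the hypothesis $x \uparrow \{y,z\}$ is actually used, is the Switchboard Axiom for the new edge $\{x,w\}$. The worry is that some edge $\{x,c\} \in [M]^2$ might satisfy $\{x,c\} > \{y,z\}$, which by transitivity would force $\{x,w\} < \{x,c\}$ and violate the Switchboard Axiom since $\{x,w\}$ and $\{x,c\}$ share $x$. But by the Upward Axiom, $x \uparrow \{y,z\}$ together with $\{y,z\} < \{x,c\}$ would give $x \uparrow \{x,c\}$, contradicting Trichotomy since $x \in \{x,c\}$. So no such $\{x,c\}$ exists, and the axiom is safe. The remaining checks (transitivity of $<$, the Switchboard Axiom for new edges not involving $x$, Trichotomy, and the Upward and Downward Axioms for the new relations) are routine case analyses given how minimally the structure has been extended. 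The $\downarrow$ direction of the lemma is handled by the evident dual construction, in which $x \downarrow \{y,z\}$ together with the Downward Axiom rules out the analogous obstruction.
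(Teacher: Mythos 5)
Your argument is correct, but it takes a genuinely different route from the paper's. The paper constructs a four-element labeled switchboard $\{x,y,z,w\}$ in which $\{x,w\} < \{y,z\}$ is the only inequality and the $\uparrow/\downarrow$ data on $\{x,y,z\}$ is chosen to match $M$; it then invokes the amalgamation property for labeled switchboards (built from Lemma~\ref{amalg}) to glue this to $M$ over $\{x,y,z\}$. There the hypothesis $x \uparrow \{y,z\}$ is used to ensure the two structures agree on the common three-element substructure, so that amalgamation applies at all. You instead build the one-point extension $M^\ast = M \cup \{w\}$ directly, placing $\{x,w\}$ below exactly the up-set of $\{y,z\}$ and making all other new edges incomparable to everything, and verify the axioms by hand. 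Both approaches work; the paper's is shorter because the axiom checking is outsourced to Lemma~\ref{amalg}, while yours is more self-contained and makes the mechanism more visible.

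One small inaccuracy: it is not quite true that the Switchboard Axiom is \emph{the only} place the hypothesis enters. To reconcile the Downward Axiom with Trichotomy at the new edge $\{x,w\}$, you must also rule out $x \downarrow e$ for any $e \geq \{y,z\}$. Indeed, in your construction $e > \{x,w\}$ holds precisely when $e \geq \{y,z\}$, so $x \downarrow e$ for such $e$ would force $x \downarrow \{x,w\}$ by the Downward Axiom, which Trichotomy forbids since $x \in \{x,w\}$. This is again excluded by $x \uparrow \{y,z\}$ together with the Upward Axiom in $M$, so the construction is sound, but that particular check is not ``routine'' in the sense of being independent of the hypothesis, and it is worth making explicit.
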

\begin{proof}
  The right-to-left direction was a consequence of the axioms of
  labeled switchboards; see Remark~\ref{proto}.  For the left-to-right
  direction, suppose $x \uparrow \{y,z\}$.  We need to find $w \in M$
  such that $\{x,w\} < \{y,z\}$.  Since the model $M$ is existentially
  closed among labeled switchboards, it suffices to instead find $w$ in
  an extension $N \supseteq M$.  Let $w$ be a point outside $M$.  Make
  the four-element set $\{x,y,z,w\}$ into a switchboard by making
  $\{x,w\} < \{y,z\}$ and no other relations hold, so that the poset
  $[\{x,y,z,w\}]^2$ looks like this:
  \begin{equation*}
    \xymatrix{ \{y,z\} \ar@{-}[d] & & & & \\
      \{x,w\} & \{x,y\} & \{x,z\} & \{y,w\} & \{z,w\}}
  \end{equation*}
  Expand $\{x,y,z,w\}$ to a labeled switchboard by making
  \begin{itemize}
  \item $x \uparrow \{y,z\}$ and $w \uparrow \{y,z\}$ and $y
    \downarrow \{x,w\}$ and $z \downarrow \{x,w\}$.
  \item $y \uparrow \{x,z\}$ if and only if it holds in $M$.
  \item $z \uparrow \{x,y\}$ if and only if it holds in $M$.
  \item All other information chosen randomly.
  \end{itemize}
  The reader can verify that the Upward and Downward Axioms (i.e.,
  Axioms (\ref{upward}) and (\ref{downward}) in
  Definition~\ref{lsdef}) hold\footnote{Note that these axioms only
  need to be checked relative to the inequality $\{x,w\} < \{y,z\}$,
  as it is the sole inequality that holds.}.

  Use the amalgmation property for labeled switchboards to amalgamate
  $\{x,y,z,w\}$ and $M$ together over $\{x,y,z\}$:
  \begin{equation*}
    \xymatrix{ & \{x,y,z\} \ar[dl] \ar[dr] & \\ M \ar@{-->}[dr] & & \{x,y,z,w\} \ar@{-->}[dl] \\ & N & }
  \end{equation*}
  Then we get a bigger labeled switchboard $N$ extending $M$, and
  containing an element $w$ such that $\{x,w\} < \{y,z\}$.
\end{proof}
So in the theory $T^+$, the two relations $\uparrow$ and $\downarrow$
are definable from the relation $<$.  Let $T^-$ be the reduct of $T^+$
to the language without $\uparrow$ and $\downarrow$.  Then $T^+$ is a
definitional expansion of $T^-$.  So $T^-$ has the same semantic
properties as $T^+$.  For example, $T^-$ is countably categorical and
complete.
\begin{theorem}
  $T^-$ is the model companion of (unlabeled) switchboards.
\end{theorem}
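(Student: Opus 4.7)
My plan is to verify the two defining properties of a model companion: mutual model-consistency with the class of switchboards, and model completeness of $T^-$.

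Model-consistency is immediate from the labeled case. Every model of $T^-$ is the $L^-$-reduct of a model of $T^+$, hence of a labeled switchboard, and therefore is itself a switchboard. Conversely, any switchboard $M$ expands via Proposition~\ref{convert} to a labeled switchboard, which embeds into some model $N^+ \models T^+$ because $T^+$ is the model companion of labeled switchboards; the $L^-$-reduct of $N^+$ is a model of $T^-$ extending $M$.

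For model completeness, my plan is to show that every $L^-$-formula is equivalent modulo $T^-$ to an existential $L^-$-formula, which suffices by Robinson's test. The heart of the argument is that modulo $T^+$, every $L^+$-literal is equivalent to an existential $L^-$-formula. The positive literals $\uparrow(x,y,z)$ and $\downarrow(x,y,z)$ are handled directly by Lemma~\ref{recover}. The negated literals are handled by combining Lemma~\ref{recover} with the Trichotomy Axiom: for example, $\neg \uparrow(x,y,z)$ is equivalent to $(x=y \vee x=z \vee y=z) \vee \exists w\,(w \ne x \wedge \{x,w\} > \{y,z\})$, and symmetrically for $\neg \downarrow$. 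Writing any quantifier-free $L^+$-formula in disjunctive normal form and invoking the closure of existential $L^-$-formulas under finite conjunctions and disjunctions, one obtains an equivalent existential $L^-$-formula. Combined with quantifier elimination for $T^+$, every $L^+$-formula---and in particular every $L^-$-formula---is equivalent modulo $T^+$ to an existential $L^-$-formula. Since $T^+$ is a definitional expansion of $T^-$, such an equivalence between $L^-$-formulas automatically holds modulo $T^-$ as well.

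I do not expect a real obstacle in this approach; the main point is to leverage the existing $L^+$-machinery. A direct argument showing $M \models T^-$ is existentially closed among switchboards would be considerably more awkward, because the labeled expansion of a switchboard extension $N \supseteq M$ produced by Proposition~\ref{convert} need not extend the $T^+$-induced labels on $M$: adding new elements to $M$ can turn a relation $x \downarrow_M \{y,z\}$ into $x \uparrow_N \{y,z\}$ by creating new witnesses $w$ with $\{x,w\} < \{y,z\}$.
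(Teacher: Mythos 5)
Your proposal is correct and follows essentially the same route as the paper: mutual model-consistency via Proposition~\ref{convert} and the definitional expansion, and model completeness from quantifier elimination of $T^+$ together with the fact that $\uparrow,\downarrow$ are both existentially and universally $L^-$-definable (Lemma~\ref{recover} plus Trichotomy). You have simply spelled out, via the DNF/Robinson's-test computation, the standard argument that the paper invokes in one sentence; your closing observation about why the na\"ive ``directly verify existential closedness'' approach runs into trouble with the labels is also accurate.
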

\begin{proof}
  It suffices to check the following three claims:
  \begin{enumerate}
  \item \emph{Every switchboard embeds into a model of $T^-$:} first use
    Proposition~\ref{convert} to expand $M$ to a labeled switchboard,
    then embed it into a model of $T^+$, then take the reduct to the
    language $L^-$.
  \item \emph{Every model of $T^-$ is a switchboard:} clear.
  \item \emph{$T^-$ is model complete}: this holds because $T^-$ has
    quantifier elimination after adding the two symbols $\uparrow,
    \downarrow$, and these symbols are both existentially definable
    and universally definable.  The existentialy definability is
    Lemma~\ref{recover}.  The universal definability holds because
    $\uparrow$ and $\downarrow$ are essentially each other's
    complements, by the Trichotomy Axiom. \qedhere
  \end{enumerate}
\end{proof}
\begin{remark}\label{no-amalg}
  The theory $T^-$ does not have quantifier elimination.  Otherwise,
  every switchboard $M$ would have a unique extension to a labeled
  switchboard, contradicting the fact that the unlabeled switchboard
  with three elements has eight extensions to a labeled switchboard
  (Remarks~\ref{predet} and \ref{determination}).  It follows that the
  class of switchboards does not have the amalgamation property.
  Concrete failures of the AP can be extracted from the proof of
  Lemma~\ref{recover}.
\end{remark}

\subsection{The theory has the SOP} \label{sop-duh}
Make $\Zz$ into a switchboard by ordering
\begin{equation*}
  \cdots < \{0,1\} < \{2,3\} < \{4,5\} < \cdots
\end{equation*}
and making everything else in $[\Zz]^2$ be incomparable.  Embed $\Zz$
into a model $(M,<)$ of $T^-$.  Then the poset $([M]^2,<)$ has an
infinite chain, inherited from $([\Zz]^2,<)$.  Therefore $T^-$ and
the definitionally equivalent theory $T^+$ (Section~\ref{whoops}) have the SOP.

\subsection{One-variable formulas} \label{endproof}
It remains to verify that $T^{\pm}$ is NSOP for formulas $\phi(x,y)$
with $|y|=1$.  Broadly speaking, the strategy is as follows.  If such
a formula $\phi$ existed, there would be a definable strict partial
order $(D,<)$ with $D \subseteq M^1$, and an ascending indiscernible
sequence $a_0 < a_1 < a_2 < \cdots$ in $(D,<)$.  We build a new
sequence $c_0, c_1, c_2, \ldots$ in such a way that $c_ic_{i+1} \equiv
a_0a_1$ for each $i$, but the $c_i$ are otherwise ``indepedent'' in
some sense related to our earlier free amalgamation.  By carefully
analyzing different levels of independence, we will show that $c_0c_n
\equiv c_nc_0$ for sufficiently large $n$.  But $c_0 < c_1 < \cdots <
c_n$, so it is impossible for $\tp(c_0,c_n)$ to equal $\tp(c_n,c_0)$.

For the remainder of the paper, work in a monster model $\Mm$ of $T^+$.  Definitions~\ref{halfshard} and \ref{disting} below should be seen as variants of ``independence'', in some very loose sense.
\begin{definition} \label{halfshard}
  Let $B \subseteq \Mm$ be small, and let $a_1, a_2 \in \Mm \setminus B$ be two
  singletons with $a_1 \equiv_B a_2$.
  \begin{enumerate}
  \item $\tp(a_1,a_2/\Mm)$ is \emph{half-symmetric} if
    \begin{gather*}
      \{a_1,b\} < \{a_2,c\} \iff \{a_2,b\} < \{a_1,c\} \\
    \end{gather*}
    for any $\{b,c\} \in [B]^2$.
  \item $\tp(a_1,a_2/\Mm)$ is \emph{symmetric} if it is
    half-symmetric, and
    \begin{gather*}
      a_1 \uparrow \{a_2,b\} \iff a_2 \uparrow \{a_1,b\} \\
      a_1 \downarrow \{a_2,b\} \iff a_2 \downarrow \{a_1,b\}
    \end{gather*}
    for any $b \in B$.
  \end{enumerate}
\end{definition}
\begin{remark}
  If $a_1, a_2 \in \Mm \setminus B$ are singletons and $a_1 \equiv_B a_2$, then
  $\tp(a_1,a_2/B)$ is symmetric if and only if $a_1a_2 \equiv_B
  a_2a_1$.  This can be seen by quantifier elimination---the only
  atomic formulas in $\tp(a_1,a_2/B)$ beyond those in $\tp(a_1/B) \cup
  \tp(a_2/B)$ are the formulas
  \begin{gather*}
    \{x_1,b\} < \{x_2,c\}, \text{ etc.} \\
    x_1 \uparrow \{x_2,b\}, \text{ etc.}
  \end{gather*}
  appearing in Definition~\ref{halfshard}, and intrinsically symmetric formulas like
  \begin{gather*}
    \{x_1,x_2\} < \{b,c\}, \text{ etc.} \\
    b \uparrow \{x_1,x_2\}, \text{ etc.}
  \end{gather*}
\end{remark}
\begin{definition} \label{disting}
  Let $a_1,a_2$ be distinct singletons in $\Mm \setminus B$.  Then
  $\tp(a_1,a_2/B)$ is \emph{distinguished} if the following two
  conditions both hold:
  \begin{itemize}
  \item For any $b, c \in B$, if $\{a_1,b\} > \{a_2,c\}$, then there
    is $\{u,v\} \in [B]^2$ such that $\{a_1,b\} > \{u,v\} >
    \{a_2,c\}$.
  \item For any $b, c \in B$, if $\{a_1,b\} < \{a_2,c\}$, then there
    is $\{u,v\} \in [B]^2$ such that $\{a_1,b\} < \{u,v\} <
    \{a_2,c\}$.
  \end{itemize}
\end{definition}
\begin{lemma}
  If $a_1 \equiv_B a_2$ and $\tp(a_1,a_2/B)$ is distinguished, then
  $\tp(a_1,a_2/B)$ is half-symmetric.
\end{lemma}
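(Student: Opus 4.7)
The plan is to unpack the hypothesis directly and exploit the interpolating edge produced by the distinguished condition. Half-symmetry asserts the biconditional $\{a_1,b\} < \{a_2,c\} \iff \{a_2,b\} < \{a_1,c\}$ for every $\{b,c\} \in [B]^2$; since both sides are vacuously false when $b = c$ by the Switchboard Axiom (each would assert a comparison between two edges sharing the vertex $b$), I may freely assume $b \neq c$.

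For the forward direction, suppose $\{a_1,b\} < \{a_2,c\}$. The second clause of Definition~\ref{disting} supplies an edge $\{u,v\} \in [B]^2$ with $\{a_1,b\} < \{u,v\} < \{a_2,c\}$. Crucially the parameters $b, c, u, v$ all lie in $B$, so the two inequalities $\{x,b\} < \{u,v\}$ and $\{u,v\} < \{x,c\}$ are $L(B)$-formulas in the single variable $x$. From $a_1 \equiv_B a_2$ I therefore obtain $\{a_2,b\} < \{u,v\}$ and $\{u,v\} < \{a_1,c\}$, and transitivity of $<$ concludes $\{a_2,b\} < \{a_1,c\}$.

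The reverse implication is the same argument with the roles of $a_1$ and $a_2$ swapped. Concretely, if $\{a_2,b\} < \{a_1,c\}$, i.e. $\{a_1,c\} > \{a_2,b\}$, then applying the first clause of Definition~\ref{disting} with $b$ and $c$ relabeled yields $\{u,v\} \in [B]^2$ with $\{a_1,c\} > \{u,v\} > \{a_2,b\}$; transporting each half of the chain across $a_1 \equiv_B a_2$ and using transitivity once more gives $\{a_1,b\} < \{a_2,c\}$.

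There is no real obstacle: the proof is a short compactness-free computation. The only substantive observation is that the distinguished condition is designed precisely so that every witnessing interpolant lives in $[B]^2$, and it is exactly this feature that allows each half of the chain to be transported across the equivalence $a_1 \equiv_B a_2$.
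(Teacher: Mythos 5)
Your proof is correct and follows essentially the same route as the paper: for each direction of the biconditional, invoke the distinguished condition to interpolate an edge $\{u,v\} \in [B]^2$, then transport the two resulting inequalities (each now an $L(B)$-formula in one of the $a_i$) across $a_1 \equiv_B a_2$ and close by transitivity. The only difference is cosmetic: you spell out the reverse direction, which the paper dispatches as ``similar,'' and you include a small (and actually redundant, since $[B]^2$ already excludes $b=c$) remark about the $b=c$ case.
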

\begin{proof}
  We must show that for any $b, c \in B$,
  \begin{equation*}
    \{a_1,b\} < \{a_2,c\} \iff \{a_2,b\} < \{a_1,c\}.
  \end{equation*}
  We prove the $\Rightarrow$ direction; the $\Leftarrow$ direction is
  similar.  Suppose $\{a_1,b\} < \{a_2,c\}$.  By definition of
  ``distinguished'', there is $\{u,v\} \in [B]^2$ such that $\{a_1,b\}
  < \{u,v\} < \{a_2,c\}$.  Since $a_1 \equiv_B a_2$, and $\{b,c,u,v\}
  \subseteq B$, we have
  \begin{gather*}
    \{a_1,b\} < \{u,v\} \implies \{a_2,b\} < \{u,v\} \\
    \{u,v\} < \{a_2,c\} \implies \{u,v\} < \{a_1,c\}.
  \end{gather*}
  Then
  \begin{equation*}
    \{a_2,b\} < \{u,v\} < \{a_1,c\},
  \end{equation*}
  so $\{a_2,b\} < \{a_1,c\}$ as desired.
\end{proof}
\begin{lemma} \label{yuck}
  Let $B$ be a small set and let $c_1, c_2, c_3$ be distinct elements of $\Mm \setminus B$, all realizing the same type over $B$.  Suppose
  \begin{itemize}
  \item $c_1$ and $c_3$ are freely amalgated over $Bc_2$ in the sense
    of Definition~\ref{freedef}.
  \item $\tp(c_1,c_2/B)$ and $\tp(c_1,c_3/B)$ and $\tp(c_2,c_3/B)$ are
    distinguished, hence half-symmetric.
  \end{itemize}
  Then $\tp(c_1,c_3/B)$ is symmetric.
\end{lemma}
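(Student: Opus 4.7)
\textbf{Proof proposal for Lemma~\ref{yuck}.}

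Since $\tp(c_1,c_3/B)$ is already half-symmetric by the preceding lemma, the only remaining task is to verify the two $\uparrow$/$\downarrow$ clauses in the definition of ``symmetric.'' Since $b \in B$ implies $b \ne c_1, c_3$, the Trichotomy Axiom turns $\uparrow$ and $\downarrow$ into Boolean complements of one another on the pairs in question, so the $\downarrow$-clause follows formally from the $\uparrow$-clause. Moreover free amalgamation is symmetric between $a_1$ and $a_2$ (conditions \ref{b5} and \ref{b6} are mirror images of one another), so it suffices to prove a single implication: $c_1 \uparrow \{c_3,b\} \implies c_3 \uparrow \{c_1,b\}$ for every $b \in B$.

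Fix $b \in B$ and assume $c_1 \uparrow \{c_3, b\}$. Apply condition~\ref{b5} of free amalgamation over $S = B \cup \{c_2\}$: there exists $\{p,q\} \in [S]^2$ with $c_1 \uparrow \{p,q\}$ and $\{p,q\} < \{c_3, b\}$. Split on whether $c_2 \in \{p,q\}$.

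In the first case $\{p,q\} \in [B]^2$. Since $c_1 \equiv_B c_3$, the $L(B)$-formulas $c_1 \uparrow \{p,q\}$ and $\{p,q\} < \{c_3,b\}$ transfer to $c_3 \uparrow \{p,q\}$ and $\{p,q\} < \{c_1,b\}$; now apply condition~\ref{b6} of free amalgamation to conclude $c_3 \uparrow \{c_1,b\}$. In the second case $\{p,q\} = \{c_2, r\}$ for some $r \in B$, so $\{c_2, r\} < \{c_3, b\}$. Here is where the distinguished hypothesis on $\tp(c_2,c_3/B)$ pays off: it produces $\{u,v\} \in [B]^2$ with $\{c_2, r\} < \{u,v\} < \{c_3, b\}$. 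The Upward Axiom applied to $c_1 \uparrow \{c_2, r\} < \{u,v\}$ gives $c_1 \uparrow \{u,v\}$, and since $u, v, b \in B$ and $c_1 \equiv_B c_3$, this transfers to $c_3 \uparrow \{u,v\}$ while $\{u,v\} < \{c_3, b\}$ transfers to $\{u,v\} < \{c_1, b\}$. Condition~\ref{b6} then yields $c_3 \uparrow \{c_1, b\}$, as desired.

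The main obstacle is the second case, where the witnessing pair for $c_1 \uparrow \{c_3,b\}$ involves $c_2$ and so cannot be transferred directly through $c_1 \equiv_B c_3$. The role of ``distinguished'' is precisely to replace such a $c_2$-involving witness by one entirely inside $B$, after which everything collapses to the easy transfer argument. No other step is subtle: transitivity of $<$ (via the Upward Axiom) and the symmetry of free amalgamation handle the rest.
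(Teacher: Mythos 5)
Your proposal is correct and takes essentially the same approach as the paper: reduce to the single implication $c_1 \uparrow \{c_3,b\} \implies c_3 \uparrow \{c_1,b\}$, invoke Condition~\ref{b5} of free amalgamation over $Bc_2$ to get a witness $\{p,q\}$, and in the case $c_2 \in \{p,q\}$, use the distinguished-ness of $\tp(c_2,c_3/B)$ to replace $\{p,q\}$ by an edge $\{u,v\} \in [B]^2$, after which $c_1 \equiv_B c_3$ does the transfer. The only cosmetic difference is the final step: you invoke Condition~\ref{b6} of free amalgamation, whereas the paper just applies the Upward Axiom directly to $c_3 \uparrow \{u,v\} < \{c_1,b\}$; these are the same thing, since the ``if'' direction of Condition~\ref{b6} is exactly an instance of the Upward Axiom.
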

\begin{proof}
  By assumption, $\tp(c_1,c_3/B)$ is half-symmetric, so it remains to
  prove that
  \begin{equation*}
    c_1 \uparrow \{c_3,b\} \iff c_3 \uparrow \{c_1,b\},
  \end{equation*}
  for $b \in B$.  By symmetry, it suffices to prove the $\Rightarrow$
  direction.  Suppose $c_1 \uparrow \{c_3,b\}$.  By the definition of
  free amalgamation (specifically, Condition~\ref{b5} in Definition~\ref{freedef}), there is $\{u_0,v_0\}
  \subseteq Bc_2$ such that
  \begin{equation*}
    c_1 \uparrow \{u_0,v_0\} \text{ and } \{u_0,v_0\} < \{c_3,b\}.
  \end{equation*}
  \begin{claim}
    There is $\{u,v\} \subseteq B$ such that
    \begin{equation*}
      c_1 \uparrow \{u,v\} \text{ and } \{u,v\} < \{c_3,b\}.
    \end{equation*}
  \end{claim}
  \begin{claimproof}
    If $\{u_0,v_0\} \subseteq B$, take $\{u,v\} = \{u_0,v_0\}$.
    Otherwise, $c_2 \in \{u_0,v_0\}$, so $\{u_0,v_0\} = \{c_2,b'\}$
    for some $b' \in B$.  Then
    \begin{equation*}
      c_1 \uparrow \{c_2,b'\} \text{ and } \{c_2,b'\} < \{c_3,b\}.
    \end{equation*}
    Since $\tp(c_2,c_3/B)$ is distinguished, there is some $\{u,v\}
    \in [B]^2$ such that
    \begin{equation*}
      \{c_2,b'\} < \{u,v\} < \{c_3,b\}.
    \end{equation*}
    By the Upward Axiom of labeled switchboards,
    \begin{equation*}
      c_1 \uparrow \{c_2,b'\} < \{u,v\} \implies c_1 \uparrow \{u,v\}. \qedhere
    \end{equation*}
  \end{claimproof}
  Since $c_1 \equiv_B c_3$ and $\{u,v,b\} \subseteq B$,
  \begin{gather*}
    c_1 \uparrow \{u,v\} \implies c_3 \uparrow \{u,v\} \\
    \{u,v\} < \{c_3,b\} \implies \{u,v\} < \{c_1,b\}.
  \end{gather*}
  Finally,
  \begin{equation*}
    c_3 \uparrow \{u,v\} < \{c_1,b\} \implies c_3 \uparrow \{c_1,b\}
  \end{equation*}
  by the Upward Axiom.
\end{proof}
\begin{lemma} \label{amalgamator}
  Let $B$ be a small set and let $a_1, a_2$ be two elements of $\Mm
  \setminus B$.  Then there is $\sigma \in \Aut(\Mm/B)$ such that
  $a_1$ and $\sigma(a_2)$ are freely amalgamated over $B$.
\end{lemma}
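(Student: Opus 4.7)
The plan is to build an abstract labeled switchboard on the set $B \cup \{a_1,a_2\}$ in which $a_1$ and $a_2$ are freely amalgamated over $B$, and then transport it back into $\Mm$ using quantifier elimination and saturation.

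First I would arrange $a_1 \ne a_2$; if equality holds, replace $a_2$ by any other realization of $\tp(a_2/B)$ outside $B \cup \{a_1\}$, using that $\Mm$ is sufficiently saturated. Let $A_i$ denote the induced labeled switchboard on $B \cup \{a_i\}$ inside $\Mm$, for $i = 1,2$. Both extend $B$, their intersection is exactly $B$, and each adds a single new element, so Lemma~\ref{amalg} applies with $S = B$ and produces a labeled switchboard structure on $N := B \cup \{a_1,a_2\}$ extending $A_1$ and $A_2$ simultaneously and in which $a_1,a_2$ are freely amalgamated over $B$ in the sense of Definition~\ref{freedef}.

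Next I would transport $N$ into $\Mm$ over $A_1$. Because $T^+$ is the model companion of labeled switchboards, $N$ embeds into some model of $T^+$; since $T^+$ has quantifier elimination, this embedding can be chosen to restrict to the identity on the substructure $A_1$. By saturation of $\Mm$, such an embedding can be realized inside $\Mm$, fixing $A_1 = Ba_1$ pointwise. Let $a_2' \in \Mm$ be the image of $a_2$. The labeled switchboard induced on $B \cup \{a_2'\}$ inside $\Mm$ then coincides with $A_2$ under $a_2 \mapsto a_2'$, so by quantifier elimination $a_2' \equiv_B a_2$, and we pick $\sigma \in \Aut(\Mm/B)$ with $\sigma(a_2) = a_2'$.

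Finally, every clause of Definition~\ref{freedef} refers only to the labeled switchboard structure induced on $B \cup \{a_1,a_2'\}$, and by construction this is exactly the structure that Lemma~\ref{amalg} produced on $N$. Hence $a_1$ and $\sigma(a_2)=a_2'$ are freely amalgamated over $B$, as required. The main potential obstacle is the transport step: one must ensure that the embedding $N \hookrightarrow \Mm$ can be arranged to fix $A_1$ pointwise, but this is a standard consequence of QE for $T^+$ combined with saturation of $\Mm$, and everything else in the argument is bookkeeping.
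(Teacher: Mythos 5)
Your proof is correct and takes essentially the same route as the paper: build the abstract free amalgam via Lemma~\ref{amalg}, then transport it into $\Mm$ using quantifier elimination and saturation. The only cosmetic differences are that you embed over $A_1 = B\cup\{a_1\}$ directly (the paper embeds over $B$ and then applies one more automorphism to carry $e_1$ to $a_1$) and that you handle the degenerate case $a_1=a_2$ explicitly rather than by working with fresh elements from the start.
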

\begin{proof}
  This follows formally from Lemma~\ref{amalg} and the fact that $T^+$
  is the model completion of labeled switchboards.  More precisely,
  take two distinct elements $c_1, c_2$ outside $B$, and make $B \cup
  \{c_i\}$ into a labeled switchboard isomorphic to $B \cup \{a_i\}$.
  Use Lemma~\ref{amalg} to make $B \cup \{c_1,c_2\}$ into a labeled
  switchboard in which $c_1$ and $c_2$ are freely amalgamated.  Use
  quantifier elimination and the fact that $\Mm$ is a monster model to
  embed $B \cup \{c_1,c_2\}$ into $\Mm$ over $B$.  The images of
  $c_1$ and $c_2$ give two elements $e_1, e_2 \in \Mm$ such that $e_i
  \equiv_B c_i \equiv_B a_i$ for $i =1,2$, and $e_1$ and $e_2$ are
  freely amalgated over $B$.  Use a further automorphism to move $e_1$
  to $a_1$.
\end{proof}
The next proposition is the technical core of the proof.
\begin{proposition} \label{core}
  Let $B$ be a finite subset of $\Mm$, let $p(x)$ be a complete 1-type
  over $B$ not realized in $B$, and let $q(x,y)$ be a complete 2-type over $B$ extending
  $p(x) \cup p(y) \cup \{x \ne y\}$.
  \begin{enumerate}
  \item There exists a sequence $c_0, c_1, c_2, \ldots$ of
    realizations of $p$, such that
    \begin{itemize}
    \item $c_ic_{i+1}$ realizes $q$ for each $i$.
    \item For $i \ge 2$, $c_i$ and $c_0$ are freely amalgamated
      over $Bc_{i-1}$.
    \end{itemize}
  \end{enumerate}
  For the remaining two points fix a sequence $c_0, c_1, c_2, \ldots$
  as in the the previous point, and assume that the elements $c_0,
  c_1, c_2, \ldots$ are pairwise distinct.
  \begin{enumerate}[resume]
  \item If $i \ge |B|$, then $\tp(c_0,c_i/B)$ is distinguished, hence
    half-symmetric.
  \item If $i > |B|$, and $q$ is distinguished, then $\tp(c_0,c_i/B)$
    is symmetric.
  \end{enumerate}
\end{proposition}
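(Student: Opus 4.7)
The plan is to handle the three parts in order. For Part~(1), I would build the sequence by induction on $i$, using Lemma~\ref{amalgamator} as the workhorse: start with any $c_0 \models p$ and any $c_1$ with $c_0 c_1 \models q$; given $c_0, \ldots, c_{i-1}$ for $i \ge 2$, first pick a tentative $c_i'$ with $c_{i-1} c_i' \models q$, and then apply Lemma~\ref{amalgamator} with base set $Bc_{i-1}$ to the pair $(c_0, c_i')$ to produce $\sigma \in \Aut(\Mm/Bc_{i-1})$ such that $c_0$ and $\sigma(c_i')$ are freely amalgamated over $Bc_{i-1}$. Setting $c_i := \sigma(c_i')$, we have $c_{i-1} c_i \equiv_B c_{i-1} c_i' \models q$ and $c_0, c_i$ freely amalgamated over $Bc_{i-1}$, as required.

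For Part~(2), suppose $\{c_0, b\} < \{c_i, d\}$ with $b, d \in B$; the aim is to produce a factor through $[B]^2$. The strategy is to iteratively ``unravel'' free amalgamations. Condition~(\ref{b1}) of Definition~\ref{freedef}, applied to the pair $(c_0, c_i)$ over $Bc_{i-1}$, yields some $\{p_1, q_1\} \in [Bc_{i-1}]^2$ with $\{c_0, b\} < \{p_1, q_1\} < \{c_i, d\}$. Either $\{p_1, q_1\} \in [B]^2$, in which case we are done, or $\{p_1, q_1\} = \{c_{i-1}, b_1\}$ for some $b_1 \in B$, in which case we apply condition~(\ref{b1}) again to $(c_0, c_{i-1})$ over $Bc_{i-2}$, and iterate. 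After $k$ unsuccessful steps one has a chain
\begin{equation*}
  \{c_0, b\} < \{c_{i-k}, b_k\} < \{c_{i-k+1}, b_{k-1}\} < \cdots < \{c_{i-1}, b_1\} < \{c_i, d\}.
\end{equation*}
By the Switchboard Axiom any two comparable edges are disjoint as sets; combined with the $c_j$'s being pairwise distinct and lying outside $B$, this forces the labels $b, b_1, \ldots, b_k, d$ to be $k+2$ pairwise distinct elements of $B$. The iteration can be carried out for at most $k = i-1$ steps, since to continue past step $i-1$ we would need free amalgamation of the pair $(c_0, c_1)$, which is not available. So if the iteration ever ran to completion without success, we would be exhibiting $i+1$ distinct elements of $B$, contradicting $i \ge |B|$. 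The $\{c_0, b\} > \{c_i, d\}$ case is handled identically using condition~(\ref{b2}).

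For Part~(3), apply Lemma~\ref{yuck} with $(c_1, c_2, c_3) := (c_0, c_{i-1}, c_i)$: all three realize $p$; $c_0$ and $c_i$ are freely amalgamated over $Bc_{i-1}$ by construction; $\tp(c_{i-1}, c_i/B) = q$ is distinguished by hypothesis; and $\tp(c_0, c_{i-1}/B)$ and $\tp(c_0, c_i/B)$ are distinguished by Part~(2), which applies at both indices $i-1$ and $i$ since $i > |B|$ gives $i-1 \ge |B|$ as well. Lemma~\ref{yuck} then delivers the symmetry of $\tp(c_0, c_i/B)$.

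The main obstacle is the counting in Part~(2): one has to notice that the Switchboard Axiom bars coincidences not merely among the intermediate $b_j$'s but also between them and the endpoint labels $b, d$. It is precisely this extra distinctness at the endpoints of the chain that produces the sharp bound $i \ge |B|$, rather than some weaker $i \ge |B|+2$.
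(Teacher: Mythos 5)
Your proposal is correct and follows essentially the same route as the paper's proof: Part~(1) is the same construction via Lemma~\ref{amalgamator}, Part~(3) is the same application of Lemma~\ref{yuck} to the triple $(c_0, c_{i-1}, c_i)$, and Part~(2) is the paper's inductive claim (unravel one free amalgamation at a time) merely presented as an explicit iteration rather than a formal induction. The only cosmetic difference in Part~(2) is at the very end: the paper derives a contradiction by pigeonhole (some $b_{j_1} = b_{j_2}$ forces two incident comparable edges), whereas you first establish that all $k+2$ labels are pairwise distinct (via the fact that comparable edges are disjoint, which is exactly the same Switchboard-Axiom observation read contrapositively) and then contradict $|B| \le i$; these are the same counting argument in two guises, and both use the endpoint labels $b,d$, so your remark that the paper might only obtain $i \ge |B|+2$ does not apply.
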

\begin{proof}
  \begin{enumerate}
  \item Take $c_0c_1$ to be any realization of $q$.  For $i \ge 2$,
    take $c_i$ such that $c_{i-1}c_i \models q$, then use
    Lemma~\ref{amalgamator} to move $c_i$ by an automorphism over
    $Bc_{i-1}$ to make $c_i$ and $c_0$ be freely amalgated over
    $Bc_{i-1}$.
  \item First we prove the
    following claim:
    \begin{claim}
      Suppose $i > 0$ and $b_i, b_0$ are elements of $B$ such that
      $\{c_i,b_i\} > \{c_0,b_0\}$.  Then one of the following happens:
      \begin{enumerate}
      \item There is $\{u,v\} \in [B]^2$ such that
        $\{c_i,b_i\} > \{u,v\} > \{c_0,b_0\}$.
      \item There are $b_1,\ldots,b_{i-1} \in B$ such that
        \begin{equation*}
          \{c_i,b_i\} > \{c_{i-1},b_{i-1}\} > \cdots > \{c_0,b_0\}.
        \end{equation*}
      \end{enumerate}
    \end{claim}
    \begin{claimproof}
      Proceed by induction on $i$.  When $i = 1$, Case~(b)
      holds trivially.  Suppose $i > 1$.  Then $c_i$ and $c_0$ are
      freely amalgamated over $Bc_{i-1}$.  Because of the definition
      of free amalgamation (i.e., Conditions~\ref{b1}, \ref{b2} of Definition~\ref{freedef}), there must be
      $\{u,v\} \in [Bc_{i-1}]^2$ such that $\{c_i,b_i\} > \{u,v\} >
      \{c_0,b_0\}$.  If $\{u,v\} \subseteq B$ then we are in
      Case~(a).  Otherwise, $c_{i-1} \in \{u,v\}$, so
      $\{u,v\} = \{c_{i-1},b_{i-1}\}$ for some $b_{i-1} \in B$.  Then
      $\{c_{i-1},b_{i-1}\} > \{c_0,b_0\}$, so by induction, one of the
      following holds:
      \begin{itemize}
      \item There are $\{u,v\} \subseteq B$ such that
        $\{c_{i-1},b_{i-1}\} > \{u,v\} > \{c_0,b_0\}$.  Then
        \begin{equation*}
          \{c_i,b_i\} > \{c_{i-1},b_{i-1}\} > \{u,v\} > \{c_0,b_0\}
        \end{equation*}
        and we are in Case~(a).
      \item There are $b_{i-2},\ldots,b_1$ such that
        \begin{equation*}
          \{c_i,b_i\} > \{c_{i-1},b_{i-1}\} > \{c_{i-2},b_{i-2}\} >
          \cdots > \{c_0,b_0\}.
        \end{equation*}
        Then we are in Case~(b).  \qedhere
      \end{itemize}
    \end{claimproof}
    Now suppose $i \ge |B|$.  We must show that $\tp(c_0,c_i/B)$ is distinguished.  There are two points to check in Definition~\ref{disting}.  We check the second point; the first is similar\footnote{\ldots using a variant of the claim where we
    replace $>$ with $<$.  The proof is identical, but we can't just
    say it works ``by symmetry'' since the definition of free
    amalgamation broke the symmetry between up and down in the
    poset.}  Suppose $b_0, b_i \in B$ are such that
    \begin{equation*}
      \{c_i,b_i\} > \{c_0,b_0\}.
    \end{equation*}
    We must find $\{u,v\} \in [B]^2$ such that
    \begin{equation*}
      \{c_i,b_i\} > \{u,v\} > \{c_0,b_0\}.
    \end{equation*}
    Otherwise, the Claim gives $b_1, b_2, \ldots, b_{i-1} \in B$ such that
    \begin{equation*}
      \{c_i,b_i\} > \{c_{i-1},b_{i-1}\} > \cdots > \{c_0,b_0\}.
    \end{equation*}
    By the pigeonhole principle, there are $j_1 < j_2 \le i$ with
    $b_{j_1} = b_{j_2} =: b$.  Then $\{c_{j_1},b\} > \{c_{j_2},b\}$,
    which contradicts the Switchboard Axiom.
  \item By the previous point and the assumption, $\tp(c_0,c_{i-1}/B)$ and $\tp(c_{i-1}c_i/B) = q$ are both distinguished.  By the previous point, $\tp(c_0,c_i/B)$ is distinguished.  By Lemma~\ref{yuck}, applied to the three elements $c_0, c_{i-1}, c_i$, we see that $\tp(c_0,c_i/B)$ is symmetric.  \qedhere
  \end{enumerate}
\end{proof}
\begin{theorem}
  Suppose $M$ is a model of $T^+$ and $\phi(x,y)$ is an $L(M)$-formula
  with $|y|=1$.  Then $\phi(x,y)$ is NSOP.
\end{theorem}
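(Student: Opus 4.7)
Suppose for contradiction that $\phi(x,y)$ has the SOP, and let $B \subseteq M$ be a finite set containing the parameters of $\phi$. After extracting an indiscernible sequence witnessing the SOP, we may assume there is a $B$-indiscernible sequence of singletons $(b_i)_{i \in \omega}$ with $\phi(\Mm, b_0) \subsetneq \phi(\Mm, b_1) \subsetneq \cdots$. Set $p = \tp(b_0/B)$, which is not realized in $B$ (else the sequence would be constant), and $q_0 = \tp(b_0, b_1/B)$. By indiscernibility, any realization $(a_1, a_2)$ of $q_0$ satisfies $\phi(\Mm, a_1) \subsetneq \phi(\Mm, a_2)$.

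The plan is to apply Proposition~\ref{core} twice. First, I invoke Part~1 with $p$ and $q_0$ to build a sequence $c_0, c_1, c_2, \ldots$ of realizations of $p$ with $c_ic_{i+1} \models q_0$ and $c_i, c_0$ freely amalgamated over $Bc_{i-1}$. The inherited strict $\phi$-chain forces the $c_i$ to be pairwise distinct, so Parts~2 and 3 apply. Setting $n = |B|+1$ and $q_1 = \tp(c_0, c_n/B)$, Part~2 gives that $q_1$ is distinguished, and transitivity of the strict $\phi$-chain shows that $q_1$ still entails $\phi(\Mm, y_1) \subsetneq \phi(\Mm, y_2)$. Thus $q_1$ is a new SOP-witnessing 2-type over $B$ which, crucially, is now distinguished.

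Second, I apply Proposition~\ref{core} again with $q = q_1$, obtaining a fresh sequence $c'_0, c'_1, \ldots$ realizing $p$ with $c'_jc'_{j+1} \models q_1$, pairwise distinct and with strictly increasing $\phi$-sets. Because $q_1$ is distinguished, Part~3 now applies and yields that $\tp(c'_0, c'_i/B)$ is symmetric for $i > |B|$. Fixing such an $i$, an automorphism of $\Mm$ over $B$ witnessing $(c'_0, c'_i) \equiv_B (c'_i, c'_0)$ carries the inclusion $\phi(\Mm, c'_0) \subseteq \phi(\Mm, c'_i)$ to its reverse, forcing the equality $\phi(\Mm, c'_0) = \phi(\Mm, c'_i)$ and contradicting the strict inclusion along the chain.

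The one step that might initially look like an obstacle is securing a distinguished SOP-witnessing 2-type, since the type $q_0$ pulled straight from an arbitrary indiscernible SOP chain need not be distinguished. The resolution is the two-step iteration above: Part~2 of Proposition~\ref{core} manufactures distinguishedness by composing $|B|+1$ steps of the original 2-type, an operation that by transitivity preserves the SOP, producing the distinguished $q_1$ that Part~3 can then convert into the symmetry which finishes the argument.
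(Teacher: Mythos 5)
Your proof is correct and follows the paper's argument precisely: both apply Proposition~\ref{core} twice, first using Part~2 to manufacture a distinguished SOP-witnessing 2-type $q_1$, then rerunning the construction with $q_1$ so that Part~3 yields a symmetric type, which contradicts the strict $\phi$-chain. The only cosmetic differences are your choice of $n = |B|+1$ rather than $n = |B|$ at the first stage (either is $\ge |B|$, so Part~2 applies) and your phrasing of the final contradiction as a forced equality rather than the paper's mutual strict inclusion.
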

\begin{proof}
  We may assume $M$ is a monster model $\Mm$.  Take a finite set $B$
  containing all the parameters in $\phi(x,y)$, so that $\phi$ is an
  $L(B)$-formula.  Take a $B$-indiscernible sequence $a_0, a_1, a_2,
  \ldots$ in $\Mm^1$ such that
  \begin{equation*}
    \phi(\Mm,a_0) \subsetneq \phi(\Mm,a_1) \subsetneq \cdots
  \end{equation*}
  Let $p = \tp(a_i/B)$ for any $i$.  Let $q = \tp(a_i,a_j/B)$ for any
  $i < j$.  If $(b,c) \models q$, then $b, c \models p$, and
  $\phi(\Mm,b) \subsetneq \phi(\Mm,c)$.

  Let $c_0, c_1, c_2, \ldots$ be a sequence as in
  Proposition~\ref{core}, with respect to our chosen $p$ and $q$.  In
  particular, $c_ic_{i+1}$ realizes $q$ for each $i$, so
  \begin{equation*}
    \phi(\Mm,c_0) \subsetneq \phi(\Mm,c_1) \subsetneq \cdots
  \end{equation*}
  which implies that the $c_i$ are pairwise distinct.  Take $n = |B|$.
  By part (2) of Proposition~\ref{core}, $q' = \tp(c_0,c_n/B)$ is
  distinguished.  Note that if $(b,c) \models q'$, then $\phi(\Mm,b)
  \subsetneq \phi(\Mm,c)$, because $\phi(\Mm,c_0) \subsetneq
  \phi(\Mm,c_n)$.

  Let $c'_0, c'_1, c'_2, \ldots$ be a sequence as in
  Proposition~\ref{core}, with respect to $p$ and $q'$.  In
  particular, $c'_ic'_{i+1}$ realizes $q'$ for each $i$, so
  \begin{equation*}
    \phi(\Mm,c'_0) \subsetneq \phi(\Mm,c'_1) \subsetneq \cdots
  \end{equation*}
  which implies that the $c'_i$ are pairwise distinct.  Take $n =
  |B|+1$.  By part (3) of Proposition~\ref{core}, $\tp(c'_0,c'_n/B)$ is
  symmetric.  Then
  \begin{equation*}
    \phi(\Mm,c'_0) \subsetneq \phi(\Mm,c'_n) \implies \phi(\Mm,c'_n) \subsetneq \phi(\Mm,c'_0),
  \end{equation*}
  a contradiction.
\end{proof}

\begin{acknowledgment}
  The author was supported by the Ministry of Education of
  China (Grant No.\@ 22JJD110002).
  Nick Ramsey encouraged the author to write up this note.  The author would also like to thank the anonymous referee, who provided many helpful comments.
\end{acknowledgment}

\bibliographystyle{plain} \bibliography{mybib}{}

\end{document}